\newcommand\redsout{\bgroup\markoverwith{\textcolor{magenta}{\rule[0.5ex]{4pt}{0.8pt}}}\ULon}
\newcommand\bluesout{\bgroup\markoverwith{\textcolor{cyan}{\rule[0.5ex]{4pt}{0.8pt}}}\ULon}
\newcommand\greensout{\bgroup\markoverwith{\textcolor{green}{\rule[0.5ex]{4pt}{0.8pt}}}\ULon}
\newcommand\orangesout{\bgroup\markoverwith{\textcolor{orange}{\rule[0.5ex]{4pt}{0.8pt}}}\ULon}
\def\ArgOne#1{\arrowcolor#1,\relax}
\def\arrowcolor#1,#2,#3\relax{#1}
\def\ArgTwo#1{\arrownumber#1,\relax}
\def\arrownumber#1,#2,#3\relax{#2}
\def\ArrowColor#1{%
    \ArgOne{#1}%
}%
\def\ArrowNumber#1{%
    \ArgTwo{#1}%
}%
\tikzset{pics/.cd,
  pic InnerGraph/.style 2 args={code={
    \def\innerradius{1.5}
    \def\radius{3}
    \def\bendangle{5}

    \foreach \v/\lab in {1, 2, 3, 4, 5, 6, 7, 8}{  
      \node[inner sep=0pt,minimum size=12pt] (n1\v) at (45-\v*45:\innerradius cm) {};
    } 
    \foreach \v/\vnext in {1/4, 8/5}{
      \draw [#1, -{Stealth[length=3mm, width=2mm]}] (n1\v) to[bend right=\bendangle] (n1\vnext); 
      \draw [#1, -{Stealth[length=3mm, width=2mm]}] (n1\vnext) to[bend left=\bendangle] (n1\v); 
    }
    \foreach \v/\vnext in {2/7, 3/6}{
      \draw [#2, -{Stealth[length=3mm, width=2mm]}] (n1\v) to[bend right=\bendangle] (n1\vnext); 
      \draw [#2, -{Stealth[length=3mm, width=2mm]}] (n1\vnext) to[bend left=\bendangle] (n1\v); 
    }
    \node[circle,draw=black!80, line width=0.5mm, inner sep=0pt, fit={(n11) (n12) (n13) (n14) (n15) (n16) (n17) (n18)}] (n1) {};

  }}
}
\tikzset{pics/.cd,
  pic Eliplse/.style n args={7}{code={
  
    \def\innerradius{1.5}
    \def\radius{3}
    \def\innernodesize{0.15}

    \newcommand{\pairarrows}[5]{
    \def\bendangle{##5}
      \begin{tikzpicture}[overlay]
        \ifthenelse{##4 = 1}{\draw [##1, -{Stealth[length=3mm, width=2mm]}] (##2) to[bend right=\bendangle] (##3)}{};
        \ifthenelse{##4 = 2}{\draw [##1, -{Stealth[length=3mm, width=2mm]}] (##3) to[bend left=\bendangle] (##2)}{};
        \ifthenelse{##4 = 3}{
          \draw [##1, -{Stealth[length=3mm, width=2mm]}] (##2) to[bend right=\bendangle] (##3);
          \draw [##1, -{Stealth[length=3mm, width=2mm]}] (##3) to[bend left=\bendangle] (##2);}{};
      \end{tikzpicture}  
    }

    \node[circle, minimum size=\innernodesize cm, inner sep=0pt, outer sep=0pt] (ne1) {};
    \node[circle, minimum size=\innernodesize cm, inner sep=0pt, outer sep=0pt]  (ne2) [right = 2cm of ne1] {};
    \node[circle, minimum size=\innernodesize cm, inner sep=0pt, outer sep=0pt]  (ne3) [below =0.5 cm of ne1] {};
    \node[circle, minimum size=\innernodesize cm, inner sep=0pt, outer sep=0pt, label={[label distance=1cm]90:#7}]  (ne4) [below =0.5 cm of ne2] {};
    
    \pairarrows{\ArrowColor{#1}}{ne1.south}{ne2.south}{\ArrowNumber{#1}}{8}
    \pairarrows{\ArrowColor{#2}}{ne1.east}{ne2.west}{\ArrowNumber{#2}}{0}
    \pairarrows{\ArrowColor{#3}}{ne1.north}{ne2.north}{\ArrowNumber{#3}}{-8}
    
    \pairarrows{\ArrowColor{#4}}{ne3.south}{ne4.south}{\ArrowNumber{#4}}{8}
    \pairarrows{\ArrowColor{#5}}{ne3.east}{ne4.west}{\ArrowNumber{#5}}{0}
    \pairarrows{\ArrowColor{#6}}{ne3.north}{ne4.north}{\ArrowNumber{#6}}{-8}

    \node[rectangle, rounded corners=0.5cm, draw=black!80, line width=0.5mm, inner sep=10pt, fit={(ne1) (ne2) (ne3) (ne4)}] (n1) {};

  }}
}
\newcommand{\outerpairarrows}[5]{
    \def\bendangle{#5}
      \begin{tikzpicture}[overlay]
        \ifthenelse{#4 = 1}{\draw [#1, -{Stealth[length=3mm, width=2mm]}] (#2) to[bend right=\bendangle] (#3)}{};
        \ifthenelse{#4 = 2}{\draw [#1, -{Stealth[length=3mm, width=2mm]}] (#3) to[bend left=\bendangle] (#2)}{};
        \ifthenelse{#4 = 3}{
          \draw [#1, -{Stealth[length=3mm, width=2mm]}] (#2) to[bend right=\bendangle] (#3);
          \draw [#1, -{Stealth[length=3mm, width=2mm]}] (#3) to[bend left=\bendangle] (#2);}{};
      \end{tikzpicture}  
    }
\tikzset{pics/.cd,
  pic InnerGraph/.style 2 args={code={
    \def\innerradius{1}
    \def\radius{3}
    \def\bendangle{5}

    \foreach \v/\lab in {1, 2, 3, 4, 5, 6, 7, 8}{  
      \node[inner sep=0pt,minimum size=0pt] (n1\v) at (60-\v*45: \innerradius cm) {};
    } 
    \foreach \v/\vnext in {1/4, 8/5}{
      \draw [#1, -{Stealth[length=3mm, width=2mm]}] (n1\v) to[bend right=\bendangle] (n1\vnext); 
      \draw [#1, -{Stealth[length=3mm, width=2mm]}] (n1\vnext) to[bend left=\bendangle] (n1\v); 
    }
    \foreach \v/\vnext in {7/2, 6/3}{
      \draw [#2, -{Stealth[length=3mm, width=2mm]}] (n1\v) to[bend right=\bendangle] (n1\vnext); 
      \draw [#2, -{Stealth[length=3mm, width=2mm]}] (n1\vnext) to[bend left=\bendangle] (n1\v); 
    }
    \node[rectangle, rounded corners=0.8cm, draw=black!80, line width=0.5mm, inner sep=0pt, fit={(n11) (n12) (n13) (n14) (n15) (n16) (n17) (n18)}] (n1) {};

  }}
}
\tikzset{pics/.cd,
  pic InnerGraphBig/.style 2 args={code={
    \def\innerradius{1}

    \def\radius{3}
    \def\bendangle{5}

    \node[] (n11) at (-1.5,1) {};
    \node[] (n12) [right of=n11] {};
    \node[] (n13) [right of=n12] {};
    \node[] (n14) [right of=n13] {};
    \node[] (n15) [below of=n11] {};
    \node[] (n16) [below of=n15] {};
    \node[] (n17) [right of=n16] {};
    \node[] (n18) [right of=n17] {};
    \node[] (n19) [right of=n18] {};
    \node[] (n110) [above of=n19] {};

    \foreach \v/\vnext in {2/5, 3/6, 4/7, 8/10}{
      \draw [#1, -{Stealth[length=3mm, width=2mm]}] (n1\v) to[bend right=\bendangle] (n1\vnext); 
      \draw [#1, -{Stealth[length=3mm, width=2mm]}] (n1\vnext) to[bend left=\bendangle] (n1\v); 
    }
    \foreach \v/\vnext in {5/7, 1/8, 2/9, 3/10}{
      \draw [#2, -{Stealth[length=3mm, width=2mm]}] (n1\v) to[bend right=\bendangle] (n1\vnext); 
      \draw [#2, -{Stealth[length=3mm, width=2mm]}] (n1\vnext) to[bend left=\bendangle] (n1\v); 
    }
    \node[rectangle, rounded corners=0.8cm, draw=black!80, line width=0.5mm, inner sep=0pt, fit={(n11) (n12) (n13) (n14) (n15) (n16) (n17) (n18)}] (n1) {};

  }}
}
\tikzset{pics/.cd,
  pic EmptyBox/.style 2 args={code={
   \def\innerradius{1.5}
    \def\radius{3}
    \def\innernodesize{0.15}
    
    \node[circle, minimum size=\innernodesize cm, inner sep=0pt, outer sep=0pt] (ne1) {};
    \node[circle, minimum size=\innernodesize cm, inner sep=0pt, outer sep=0pt]  (ne2) [right =#1 cm of ne1] {};
    \node[circle, minimum size=\innernodesize cm, inner sep=0pt, outer sep=0pt]  (ne3) [below =#2 cm of ne1] {};
    
    \node[rectangle, rounded corners=0.5cm, draw=black!80, line width=0.5mm, inner sep=10pt, fit={(ne1) (ne2) (ne3)}] (n1) {};
  }}
}
\newtheorem{theorem}{Theorem}
\newtheorem{lemma}[theorem]{Lemma}
\newtheorem{claim}[theorem]{Claim}
\begin{document}

\title{Directed graphs without rainbow triangles\thanks{This work was supported by the National Science Centre grant 2021/42/E/ST1/00193. An extended abstract announcing the results presented in this paper has been published in the Proceedings of Eurocomb'23.}}

\author{
Sebastian Babiński\thanks{Faculty of Mathematics and Computer Science, Jagiellonian University, {\L}ojasiewicza 6, 30-348 Krak\'{o}w, Poland. E-mail: {\tt Sebastian.Babinski@student.uj.edu.pl}.}\and
Andrzej Grzesik\thanks{Faculty of Mathematics and Computer Science, Jagiellonian University, {\L}ojasiewicza 6, 30-348 Krak\'{o}w, Poland. E-mail: {\tt Andrzej.Grzesik@uj.edu.pl}.}\and
Magdalena Prorok\thanks{AGH University, al.~Mickiewicza 30, 30-059 Krakow, Poland. E-mail: {\tt prorok@agh.edu.pl}.}}

\date{}

\maketitle

\begin{abstract}
One of the most fundamental results in graph theory is Mantel's theorem which determines the maximum number of edges in a triangle-free graph of order $n$. 
Recently a colorful variant of this problem has been solved. In such a variant we consider $c$~graphs on a common vertex set, thinking of each graph as edges in a~distinct color, and want to determine the smallest number of edges in each color which guarantees existence of a rainbow triangle. 
Here, we solve the analogous problem for directed graphs without rainbow triangles, either directed or transitive, for any number of colors. 
The constructions and proofs essentially differ for $c=3$ and $c \geq 4$ and the type of the forbidden triangle.
Additionally, we also solve the analogous problem in the setting of oriented graphs. 
\end{abstract}



\section{Introduction}

A cornerstone of the extremal graph theory is Mantel's theorem from 1907 which determines the maximum possible number of edges in a triangle-free graph of a given order. 
Its natural generalization, known as a Tur\'an problem, is to determine the maximum possible number of edges in an $n$-vertex graph not containing a given graph $F$ as a subgraph.
This is an often studied concept in graph theory with many important results, open problems, and generalizations to various other discrete settings.


In a rainbow version of the Tur\'an problem, for a graph $F$ and an integer $c$ we consider $c$ graphs $G_1, G_2, \ldots, G_c$ on the same set of vertices and ask for the maximum possible number of edges in each graph avoiding appearance of a copy of $F$ having at most one edge from each graph. 
In other words, for every $i \in \{1,2,\ldots,c\}$ we color edges of $G_i$ in color $i$ and forbid all copies of $F$ having non-repeated colors, so called rainbow copies. 
Note that if all $G_i$ are exactly the same, then the existence of a rainbow copy of $F$ is equivalent to the existence of a non-colored copy of~$F$, therefore any bound for the rainbow version gives also a bound for the Tur\'an problem. 

When the forbidden graph $F$ is a triangle, it follows from a result of Keevash, Saks, Sudakov and Verstra\"ete~\cite{KSSV04} that for $c \geq 4$ colors the best possible number of edges in each color without having a rainbow triangle is equal to $\frac{1}{4}n^2$. 
This is achieved in the balanced complete bipartite graph (the same in each color) as in Mantel's theorem. 
Surprisingly, Magnant \cite{Mag15} provided a construction showing that for 3 colors the answer is different. 
Recently, Aharoni, DeVos, Gonz\'alez, Montejano and \v S\'amal \cite{ADMMS20}, answering a question of Diwan and Mubayi~\cite{DM07}, proved that for $3$ colors the optimal asymptotic bound is $\left(\frac{26-2\sqrt{7}}{81}\right)n^2 \approx 0.2557n^2$.

Later, Frankl \cite{Fra22} made a conjecture on the optimal bound for the product of the number of edges in each of the 3 colors without having a rainbow triangle. 
This was disproved by Frankl, Gy\H ori, Hel, Lv, Salia, Tomkins, Varga and Zhu \cite{FGHLSTVZ22}. Finally, Falgas-Ravry, Markstr\" om and R\" aty \cite{FMR22} completely determined the triples of the asymptotic number of edges in each color that forces the existence of a rainbow triangle.
Similar problems were also considered for other rainbow structures than triangles, for instance for paths \cite{BG22}, color-critical graphs~\cite{CKLLS22}, or spanning subgraphs \cite{GHMPS22, JK20}, as well as when the minimum degree instead of the number of edges is optimized \cite{FMR23}. 

Here, we consider the problem in the setting of directed graphs and solve it for any number of colors when a transitive triangle or a directed triangle is the forbidden rainbow graph. 

It occurs that for both kinds of triangles and at least 4 colors, the maximum number of edges in each directed graph is attained when each of them is the same directed graph obtained from $K_{\frac{n}{2},\frac{n}{2}}$ by replacing every edge by edges directed in both directions (see Figure~\ref{fig:K_n,n}).
This construction is natural as in each color we have a directed graph that maximizes the number of edges without creating any triangle. 

\begin{figure}[ht]
\centering
\begin{tikzpicture}
\def\radius{5}
\def\baseangle{30}
\def\bendangle{10}
\def\abovecorrection{-0.5}
\def\nodesizebigelipse{0.15}

\pic [local bounding box=A]                  {pic EmptyBox={0.7}{1.3}};
\pic [local bounding box=B, above right = \abovecorrection cm and 3cm of A]   {pic EmptyBox={0.7}{1.3}};

\draw [green, -{Stealth[length=3mm, width=2mm]}] (A) to (B); 
\draw [green, -{Stealth[length=3mm, width=2mm]}] (B) to (A); 

\begin{scope}[transform canvas={yshift=0.5em}]

\draw [red, -{Stealth[length=3mm, width=2mm]}] (A) to[bend left=\bendangle] (B); 

\draw [red, -{Stealth[length=3mm, width=2mm]}] (B) to[bend right=\bendangle] (A); 

\end{scope}

\begin{scope}[transform canvas={yshift=-0.5em}]

\draw [blue, -{Stealth[length=3mm, width=2mm]}] (A) to[bend right=\bendangle] (B); 

\draw [blue, -{Stealth[length=3mm, width=2mm]}] (B) to[bend left=\bendangle] (A); 

\end{scope}

\end{tikzpicture}
\caption{The optimal construction for at least 4 colors and forbidden rainbow directed or transitive triangle.}\label{fig:K_n,n}
\end{figure}
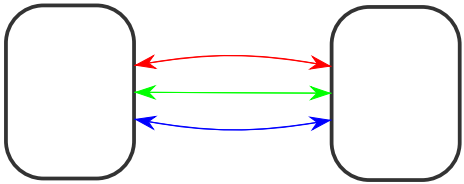

\begin{theorem}\label{cor:4+}
Let $G_1, G_2, \ldots, G_c$ be $c \geq 4$ directed graphs on a common set of $n$ vertices.
If $e(G_i) > \frac{1}{2}n^2$ for every $i \in [c]$, then there exists a rainbow directed triangle and a rainbow transitive triangle.
\end{theorem}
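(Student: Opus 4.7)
The plan is to reduce the statement to the undirected rainbow Mantel theorem for $c\geq 4$ colors of Keevash, Saks, Sudakov, and Verstra\"ete, and then to upgrade a single rainbow undirected triangle to rainbow triangles of \emph{both} orientation types by exploiting the slack in the hypothesis $e(G_i)>\frac{1}{2}n^2$.

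First, for each color $i$, I form the underlying undirected graph $U_i$ of $G_i$, with $\{u,v\}\in E(U_i)$ iff $uv\in G_i$ or $vu\in G_i$. Since each edge of $U_i$ is witnessed by at most two directed edges of $G_i$, we have $e(U_i)\geq e(G_i)/2>\frac{1}{4}n^2$, so the Keevash--Saks--Sudakov--Verstra\"ete theorem applied to $U_1,\ldots,U_c$ produces an undirected rainbow triangle on some vertex set $\{x,y,z\}$ in three distinct colors $i,j,k$. Choosing any allowed direction for each of its three edges yields a directed triangle which is automatically either a directed $3$-cycle or transitive, so \emph{at least one} of the two sought rainbow triangles already exists.

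To obtain \emph{both} types, I would exploit that $\frac{1}{2}n^2>\binom{n}{2}$ strictly exceeds the maximum number of edges in any orientation of $K_n$. Each $G_i$ therefore necessarily contains \emph{bidirectional} pairs $\{u,v\}$ with both $uv,vu\in G_i$, and the elementary identity $2d_i+s_i=e(G_i)$ together with $d_i+s_i\leq\binom{n}{2}$ gives $d_i>n/2$ in every color, where $d_i$ and $s_i$ count bidirectional and single-directional unordered pairs. If some rainbow undirected triangle contains an edge that is bidirectional in its color, then both directions of that edge are legal lifts; reversing that one edge flips the triangle type between cyclic and transitive, and we get both types from the same triangle.

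The technical heart of the argument, and the step I expect to be the main obstacle, is to produce such a \emph{flexible} rainbow undirected triangle. My plan here is either to use a supersaturation argument (the strict inequality $e(U_i)>\frac{1}{4}n^2$ yields $\Omega(n^3)$ rainbow undirected triangles, and the surplus of more than $n/2$ bidirectional pairs in each color forces at least one of them to use a bidirectional edge), or to argue by contradiction, assuming every rainbow undirected triangle is rigid and deducing that the family $(G_i)$ must lie close to the extremal bidirected bipartite configuration of Figure~\ref{fig:K_n,n}, contradicting $e(G_i)>\frac{1}{2}n^2$. The hypothesis $c\geq 4$ enters naturally as extra flexibility, providing a spare color with which to swap any edge whose rigidity is being used in the argument.
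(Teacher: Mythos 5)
Your reduction to the undirected theorem correctly yields \emph{one} rainbow triangle of \emph{some} orientation type, but the upgrade to both types has two genuine gaps. First, the flip step is wrong: a rainbow undirected triangle on $\{x,y,z\}$ in which the edge $\{x,y\}$ is bidirectional in its assigned color does \emph{not} let you realize both a cyclic and a transitive orientation. If the other two edges are single-directional and both point towards (or both away from) the common vertex $z$, then both completions obtained by flipping $\{x,y\}$ are transitive; a cyclic completion exists only if the other two edges can be oriented to form a directed path through $z$. So even the ``flexible'' triangle you are after would not finish the argument. Second, the existence of such a triangle is not established: the surplus you compute is only $d_i>n/2$ bidirectional pairs per color -- a linear number -- so no counting argument can force a rainbow triangle to use one of them (all of these $O(n)$ pairs could lie on edges avoided by every rainbow triangle), and the supersaturation claim that $e(U_i)>\frac14 n^2$ yields $\Omega(n^3)$ rainbow triangles is unjustified, since the excess over the threshold may be as small as a single edge. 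The stability alternative is stated as a hope rather than an argument. A smaller further issue is that the Keevash--Saks--Sudakov--Verstra\"ete result, as cited, is an asymptotic density statement, so your first step would at best give the theorem for large $n$ with an error term, not the exact bound $\frac12 n^2$.

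For comparison, the paper does not pass through the undirected result at all: it proves the stronger Theorems~\ref{thm:directed4+} and~\ref{thm:transitive4+}, which assume only $\sum_i e(G_i)>\frac{c}{2}n^2$, by a minimal-counterexample argument that first bounds the number of colors in which a pair of vertices can carry a double edge (Claims~\ref{cla:many_col_double_edges} and~\ref{cla:two_double_edges}) and, in the directed-triangle case, analyses the auxiliary digraph of pairs joined by $c+1$ edges. The two triangle types are handled by two separate structural arguments (Sections~\ref{sec:directed4+} and~\ref{sec:transitive4+}); they are not derived from one another by reorienting edges, which is consistent with the fact that for $c=3$ the two problems have genuinely different extremal answers.
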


For 3 colors the behavior is completely different. 
If a rainbow directed triangle is forbidden then we prove that the asymptotically optimal construction is obtained by splitting the vertex set into three sets of $n/3$ vertices $A_{1}$, $A_{2}$ and $A_{3}$, and for each $i \in [3]$ letting $A_i$ to contain a complete directed graph in colors different than $i$, and having all edges from $A_1$ to $A_2$, from $A_1$ to $A_3$, and from $A_2$ to $A_3$ (see Figure~\ref{fig:directed3}). 

\begin{figure}[ht]
\centering
\begin{tikzpicture}[scale=0.6]
\def\radius{4}
\def\baseangle{30}

\def\bendangle{10}

\pic [local bounding box=A1] at (\baseangle-1*120:\radius cm) {pic InnerGraph={green}{red}};
\pic [local bounding box=A2] at (\baseangle-2*120:\radius cm) {pic InnerGraph={red}{blue}};
\pic [local bounding box=A3] at (\baseangle-3*120:\radius cm) {pic InnerGraph={blue}{green}};

\draw [red, -{Stealth[length=3mm, width=2mm]}, shorten >=1.05cm, shorten <=1.05cm] (A1.center) to[bend left=\bendangle] (A2.center); 
\draw [blue, -{Stealth[length=3mm, width=2mm]}, shorten >=1.05cm, shorten <=1.05cm] (A1.center) to[bend right=\bendangle] (A2.center); 
\draw [green, -{Stealth[length=3mm, width=2mm]}] (A1) to (A2); 

\draw [red, -{Stealth[length=3mm, width=2mm]}] (A2) to[bend left=\bendangle] (A3); 
\draw [blue, -{Stealth[length=3mm, width=2mm]}] (A2) to[bend right=\bendangle] (A3); 
\draw [green, -{Stealth[length=3mm, width=2mm]}] (A2) to (A3); 

\draw [red, -{Stealth[length=3mm, width=2mm]}, shorten >=1.05cm, shorten <=1.05cm] (A1.center) to[bend left=\bendangle] (A3.center); 
\draw [blue, -{Stealth[length=3mm, width=2mm]}, shorten >=1.05cm, shorten <=1.05cm] (A1.center) to[bend right=\bendangle] (A3.center); 
\draw [green, -{Stealth[length=3mm, width=2mm]}] (A1) to (A3); 

\end{tikzpicture}
\caption{The asymptotically optimal construction for 3 colors and forbidden rainbow directed triangle.}\label{fig:directed3}
\end{figure}
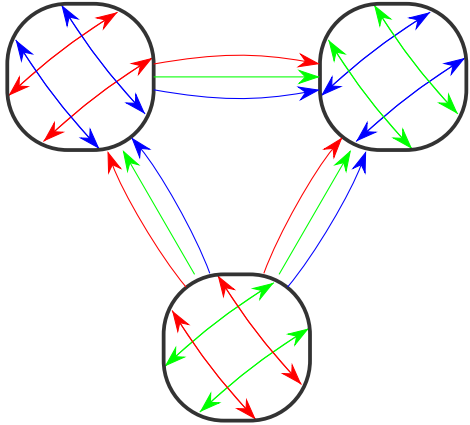

\begin{theorem}\label{cor:directed3}
Let $G_1$, $G_2$, $G_3$ be three directed graphs on a common set of $n$ vertices. If $e(G_i) \geq \frac{5}{9}n^2$ for every $i \in[3]$, then there exists a rainbow directed triangle.
\end{theorem}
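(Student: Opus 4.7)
The plan is to argue by contradiction: assume that $G_1, G_2, G_3$ admit no rainbow directed triangle while $e(G_i) \geq \tfrac{5}{9}n^2$ holds for every $i \in [3]$, and derive a contradiction. Writing $A_i \in \{0,1\}^{n\times n}$ for the adjacency matrix of $G_i$, the number of rainbow directed triangles with cyclic colour order $(1,2,3)$ equals $\operatorname{tr}(A_1 A_2 A_3)$, and with cyclic colour order $(1,3,2)$ it equals $\operatorname{tr}(A_1 A_3 A_2)$. Hence our assumption is equivalent to
\[
\operatorname{tr}(A_1 A_2 A_3) \;=\; \operatorname{tr}(A_1 A_3 A_2) \;=\; 0.
\]
Unpacking each trace, for every cyclic permutation $(i,j,k)$ of $[3]$ and every edge $z\to x$ of $G_k$, the out-neighbourhood $N^+_i(x)$ is disjoint from the in-neighbourhood $N^-_j(z)$, giving the local inequality $d^+_i(x) + d^-_j(z) \leq n$. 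Summing each such inequality over its defining edge-set yields six quadratic inequalities relating the bilinear co-degree sums $\sum_v d^+_i(v)\,d^-_j(v)$ to the colour sizes $e(G_i)$.

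The core of the proof will be to aggregate these six inequalities---pairing the three cyclic rotations of the two cyclic colour orders symmetrically---and then apply a Cauchy--Schwarz / convexity step to pass from the bilinear co-degree sums to the quantities $e(G_i)^2/n$. After rearranging, this should yield a global estimate of the form $e(G_1)+e(G_2)+e(G_3) \leq \tfrac{5}{3}n^2$ up to lower-order terms, with equality attainable only in a near-tripartite configuration mirroring Figure~\ref{fig:directed3}. Under the hypothesis $e(G_i) \geq \tfrac{5}{9}n^2$, this forces all three quantities to sit simultaneously at the threshold and the digraphs to be essentially extremal; a direct stability argument in this quasi-extremal regime then locates a rainbow directed triangle explicitly, producing the contradiction.

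The main obstacle is the aggregation step itself. The six local inequalities must be combined with precisely the right weighting so that the resulting global bound is sharp on the tripartite construction of Figure~\ref{fig:directed3}. In contrast with the undirected rainbow Mantel problem of Aharoni, DeVos, Gonz\'alez, Montejano, and \v S\'amal~\cite{ADMMS20}, the directed setting must handle in-degrees and out-degrees \emph{asymmetrically}---in the extremal construction the between-part edges point in only one direction, while the within-part edges are bidirected---and it must simultaneously respect the symmetry between the two allowed cyclic colour orders around a rainbow triangle. Balancing these two demands is what makes the final convexity argument delicate. A further subtlety is that the extremal construction in Figure~\ref{fig:directed3} attains $e(G_i) = \tfrac{5}{9}n^2 - O(n)$ rather than exactly $\tfrac{5}{9}n^2$, so the lower-order terms in the aggregate bound must be tracked carefully in order to close the gap to the asserted exact threshold.
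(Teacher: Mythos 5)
There is a genuine gap here, and in fact the central inequality you aim for is false. You propose to aggregate the local inequalities into a global bound of the form $e(G_1)+e(G_2)+e(G_3)\leq\frac{5}{3}n^2$ (up to lower-order terms) for any triple of digraphs with no rainbow directed triangle. But take $G_1=G_2$ to be the complete digraph (every pair of vertices joined by a double edge) and $G_3$ empty: there is no rainbow directed triangle, yet the total number of edges is $2\cdot 2\binom{n}{2}=2n(n-1)>\frac{5}{3}n^2$ once $n>6$. This is precisely why the paper does not prove a bound on the \emph{total} number of edges for three colours, but instead the pairwise bound $e(G_i)+e(G_j)<\frac{10}{9}n^2$ (Theorem~\ref{thm:directed3}), from which the stated theorem follows by summing two colours at a time. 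Your plan as written therefore cannot be repaired by ``tracking the lower-order terms''; the aggregation target itself has to change.

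Even setting that aside, the aggregation step --- which you correctly identify as the main obstacle --- is the entire content of the proof and is not supplied, and the natural route you indicate does not work. First, passing from the bilinear co-degree sums $\sum_v d_i^{+}(v)\,d_j^{-}(v)$ with $i\neq j$ down to quantities like $e(G_i)e(G_j)/n$ requires a Chebyshev-type correlation between the degree sequences of \emph{different} colours that is simply not available; Cauchy--Schwarz only controls same-colour sums. Second, the local inequalities $d_i^{+}(x)+d_j^{-}(z)\leq n$ are very slack on the extremal configuration of Figure~\ref{fig:directed3} (for instance, a vertex $z\in A_1$ there has $d_2^{-}(z)=0$, so the relevant inequality reads roughly $\frac{2}{3}n\leq n$), so no nonnegative combination of these inequalities can be tight at the $\frac{5}{9}n^2$ threshold; this already rules out obtaining the sharp constant by summing them. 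The paper's proof is of a completely different nature: it takes a minimal counterexample, decomposes the vertex set via successive maximal matchings of double edges into classes $X$, $Y$, $Z$, $R$, bounds the number of edges between every pair of classes (using a bipartite Mantel-type lemma, Lemma~\ref{lem:bipartMantel}, applied to an auxiliary graph), and finishes with a constrained optimization in four variables whose unique feasible point reproduces Figure~\ref{fig:directed3}. None of that structure is visible in your outline, and the analytic shortcut you hope for does not exist.
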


In the case of a forbidden rainbow transitive triangle we show that the asymptotically optimal construction is as in the undirected setting~\cite{ADMMS20} with all edges replaced by edges directed in both ways. Formally speaking, we have two sets of $\frac{4-\sqrt{7}}{9}n$ vertices, one set of $\frac{1+2\sqrt{7}}{9}n$ vertices, each set contains a complete directed graph in two colors, while between the sets we have all edges in the color missing in the largest set (see Figure~\ref{fig:transitive3}). 

\begin{figure}[ht]
\centering
\begin{tikzpicture}[scale=.6]
\def\radius{4}
\def\baseangle{30}
\def\bendangle{10}

\pic [local bounding box=A1] at (\baseangle-1*120:\radius cm) {pic InnerGraphBig={green}{red}};
\pic [local bounding box=A2] at (\baseangle-2*120:\radius cm) {pic InnerGraph={red}{blue}};
\pic [local bounding box=A3] at (\baseangle-3*120:\radius cm) {pic InnerGraph={blue}{green}};

\draw [blue, -{Stealth[length=3mm, width=2mm]}] (A1) to[bend left=\bendangle] (A2); 
\draw [blue, -{Stealth[length=3mm, width=2mm]}, shorten >=1.05cm, shorten <=1.2cm] (A1.center) to[bend right=\bendangle] (A2.center); 
\draw [blue, -{Stealth[length=3mm, width=2mm]}] (A1) to (A2); 

\draw [blue, -{Stealth[length=3mm, width=2mm]}] (A2) to[bend right=\bendangle] (A1); 
\draw [blue, -{Stealth[length=3mm, width=2mm]}, shorten >=1.2cm, shorten <=1.05cm] (A2.center) to[bend left=\bendangle] (A1.center); 
\draw [blue, -{Stealth[length=3mm, width=2mm]}] (A2) to (A1); 

\draw [blue, -{Stealth[length=3mm, width=2mm]}] (A2) to[bend left=\bendangle] (A3); 
\draw [blue, -{Stealth[length=3mm, width=2mm]}] (A2) to[bend right=\bendangle] (A3); 
\draw [blue, -{Stealth[length=3mm, width=2mm]}] (A2) to (A3); 

\draw [blue, -{Stealth[length=3mm, width=2mm]}] (A3) to[bend right=\bendangle] (A2); 
\draw [blue, -{Stealth[length=3mm, width=2mm]}] (A3) to[bend left=\bendangle] (A2); 
\draw [blue, -{Stealth[length=3mm, width=2mm]}] (A3) to (A2); 

\draw [blue, -{Stealth[length=3mm, width=2mm]}, shorten >=1.05cm, shorten <=1.2cm] (A1.center) to[bend left=\bendangle] (A3.center); 
\draw [blue, -{Stealth[length=3mm, width=2mm]}, shorten >=1.2cm, shorten <=1.05cm] (A3.center) to[bend right=\bendangle] (A1.center); 

\draw [blue, -{Stealth[length=3mm, width=2mm]}] (A1) to[bend right=\bendangle] (A3); 
\draw [blue, -{Stealth[length=3mm, width=2mm]}] (A3) to[bend left=\bendangle] (A1);

\draw [blue, -{Stealth[length=3mm, width=2mm]}] (A1) to (A3);
\draw [blue, -{Stealth[length=3mm, width=2mm]}] (A3) to (A1);

\end{tikzpicture}
\caption{The asymptotically optimal construction for 3 colors and forbidden rainbow transitive triangle.}\label{fig:transitive3}
\end{figure}
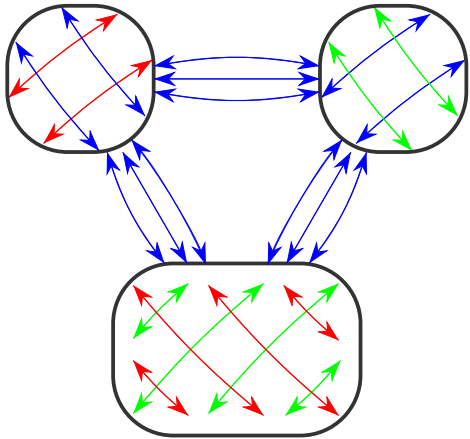

\begin{theorem}\label{cor:transitive3}
Let $G_1, G_2, G_3$ be three directed graphs on a common set of $n$ vertices.  If $e(G_i) > \left(\frac{52-4\sqrt{7}}{81}\right)n^2+ \frac{3}{2}n$ for every $i \in [3]$, then there exists a rainbow transitive triangle.
\end{theorem}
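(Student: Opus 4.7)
I would reduce Theorem~\ref{cor:transitive3} to the undirected rainbow-triangle theorem of Aharoni, DeVos, Gonz\'alez, Montejano and \v{S}\'amal~\cite{ADMMS20} and then lift the resulting undirected triangle to a transitive directed one. Let $\sdG_i$ denote the underlying undirected graph of $G_i$, so $\{u,v\} \in E(\sdG_i)$ iff at least one of $(u,v),(v,u)$ lies in $G_i$. Each edge of $\sdG_i$ corresponds to at most two directed edges of $G_i$, whence
\[
  e(\sdG_i) \;\ge\; \tfrac{1}{2}\,e(G_i) \;>\; \Bigl(\tfrac{26-2\sqrt{7}}{81}\Bigr)n^2 + \tfrac{3}{4}n,
\]
strictly above the threshold of the undirected theorem. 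Applying it yields a rainbow undirected triangle in $\sdG_1,\sdG_2,\sdG_3$ on some vertex set $\{a,b,c\}$ whose edges $\{a,b\}, \{b,c\}, \{a,c\}$ lie in three distinct colours.

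Each edge of this rainbow triangle admits at least one valid orientation in its colour class, and of the $8$ orientations of a triangle $6$ are transitive and only $2$ are cyclic. A short case analysis (on how many of the three edges are symmetric in their colour, and on the relative direction of forced edges when they share a source or sink) shows that whenever at least one of the three edges is symmetric in its colour, a transitive orientation of the triangle can always be realised. The only possible obstruction is when all three chosen edges are asymmetric in their colours \emph{and} the forced orientations form a directed (cyclic) triangle $a\to b\to c\to a$.

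To rule out this obstruction I would work with the symmetric subgraphs $G_i^s\subseteq G_i$ of bidirected edges, whose underlying undirected graph $\widehat{G_i^s}$ satisfies $e(\widehat{G_i^s})=e(G_i)-e(\sdG_i)$. In one branch, if the $\widehat{G_i^s}$ still surpass the undirected threshold in every colour, the theorem of Aharoni et al.\ produces a rainbow undirected triangle with all three edges symmetric, which lifts to any orientation---in particular a transitive one. In the opposite branch, the asymmetric edges of some colour are abundant; then either the freedom in colour-assignment to an edge lying in more than one $\sdG_i$ lets us re-select a rainbow triangle with a symmetric edge, or a stability comparison with the extremal construction of Figure~\ref{fig:transitive3} (which is fully bidirected and hits exactly the bound $(\tfrac{52-4\sqrt{7}}{81})n^2$) yields a contradiction. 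The additive $\tfrac{3}{2}n$ term in the hypothesis is designed to supply precisely the linear slack needed to close this stability step.

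The main obstacle is the last step, ruling out the ``all asymmetric, forced cyclic'' configuration across every rainbow triangle in $\sdG_1,\sdG_2,\sdG_3$ simultaneously. I expect this to require a delicate case analysis based on how the asymmetric edges of each $G_i$ intersect the family of rainbow triangles in the underlying undirected graphs---essentially a stability version of the undirected theorem of Aharoni et al.\ adapted to the directed setting---and it is where the bulk of the technical content should be concentrated.
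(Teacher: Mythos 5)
Your opening step is sound and matches the paper's: passing to the underlying undirected graphs $\sdG_i$, noting $e(\sdG_i)\ge\frac12 e(G_i)$, and invoking the undirected theorem of Aharoni et al.\ (Theorem~\ref{thm:undirected3}) to obtain a rainbow undirected triangle; your analysis that the only obstruction to lifting it is the case where all three chosen edges are single edges whose forced orientations form a directed cycle is also correct. The problem is that everything after that point --- which is precisely the content of the theorem beyond the undirected result --- is left as a plan rather than a proof. You offer two branches: one in which the bidirected parts $\widehat{G_i^s}$ still exceed the undirected threshold (fine), and an ``opposite branch'' resolved by ``re-selecting'' a triangle or by ``a stability comparison with the extremal construction.'' No stability version of Theorem~\ref{thm:undirected3} is stated, proved, or cited, and you yourself flag this as the main obstacle where ``the bulk of the technical content should be concentrated.'' As written, this is a genuine gap: the argument does not rule out a counterexample in which every rainbow triangle of the underlying graphs happens to be cyclically forced.

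The paper closes this gap without any stability analysis, via an extremal choice of counterexample. It proves the stronger pairwise statement (Theorem~\ref{thm:transitive3}, with $e(G_i)+e(G_j)$ bounded below), takes a counterexample that is vertex-minimal and, among those, edge-maximal, and shows that in it every edge must be a double edge. The key move: if single edges exist, let $s_i$ be the number of pairs joined by a single edge in color $i$ with $s_1\le s_2\le s_3$; delete all single edges of color $1$ and promote every single edge of colors $2$ and $3$ to a double edge. This never decreases any pairwise sum $e(G_i)+e(G_j)$, strictly increases the total number of edges, and creates no new rainbow transitive triangle (a rainbow transitive triangle in the modified graph uses only double edges and no newly created color-$1$ edge, so it comes from a configuration in $G$ with a double edge in color $1$ and one edge in each of colors $2$ and $3$, which already contains a rainbow transitive triangle whatever the orientations). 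This contradicts edge-maximality, so all edges are double, and then the undirected rainbow triangle lifts to a transitive one immediately --- exactly the ``all symmetric'' branch of your case analysis, now shown to be the only branch that can occur. If you want to complete your write-up, replacing the stability step with this maximality argument is the missing idea.
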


We additionally solve the analogous rainbow Tur\'an problem in the setting of oriented graphs, i.e., directed graphs without pairs of vertices connected in both directions. 
In this setting the problem is easy if a rainbow directed triangle is forbidden, because one can take each oriented graph as the same transitive tournament. 
While for a rainbow transitive triangle, we show that the optimal construction is obtained by splitting the vertex set into three equal parts with edges between the parts forming directed triangles (see Figure~\ref{fig:oriented}).

\begin{figure}[ht]
\centering
\begin{tikzpicture}[scale=0.6]
\def\radius{4}
\def\baseangle{30}

\def\bendangle{10}

\pic [local bounding box=A1] at (\baseangle-1*120:\radius cm) {pic InnerGraph={white}{white}};
\pic [local bounding box=A2] at (\baseangle-2*120:\radius cm) {pic InnerGraph={white}{white}};
\pic [local bounding box=A3] at (\baseangle-3*120:\radius cm) {pic InnerGraph={white}{white}};

\draw [red, -{Stealth[length=3mm, width=2mm]}, shorten >=1.05cm, shorten <=1.05cm] (A1.center) to[bend left=\bendangle] (A2.center); 
\draw [blue, -{Stealth[length=3mm, width=2mm]}, shorten >=1.05cm, shorten <=1.05cm] (A1.center) to[bend right=\bendangle] (A2.center); 
\draw [green, -{Stealth[length=3mm, width=2mm]}] (A1) to (A2); 

\draw [red, -{Stealth[length=3mm, width=2mm]}] (A2) to[bend left=\bendangle] (A3); 
\draw [blue, -{Stealth[length=3mm, width=2mm]}] (A2) to[bend right=\bendangle] (A3); 
\draw [green, -{Stealth[length=3mm, width=2mm]}] (A2) to (A3); 

\draw [red, -{Stealth[length=3mm, width=2mm]}, shorten >=1.05cm, shorten <=1.05cm] (A3.center) to[bend left=\bendangle] (A1.center); 
\draw [blue, -{Stealth[length=3mm, width=2mm]}, shorten >=1.05cm, shorten <=1.05cm] (A3.center) to[bend right=\bendangle] (A1.center); 
\draw [green, -{Stealth[length=3mm, width=2mm]}] (A3) to (A1); 

\end{tikzpicture}
\caption{The optimal construction for 3 oriented graphs and forbidden rainbow transitive triangle.}\label{fig:oriented}
\end{figure}
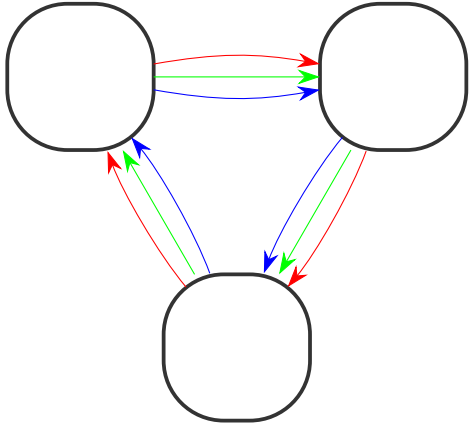

\begin{theorem}\label{cor:oriented_transitive}
Let $G_1$, $G_2$, \ldots, $G_c$ be $c\geq3$ oriented graphs on a common set of $n$ vertices. 
If $e(G_i) > \frac{1}{3}n^2$ for every $i \in [c]$, then there exists a rainbow transitive triangle.
\end{theorem}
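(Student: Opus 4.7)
It suffices to prove the statement for $c=3$, since any rainbow transitive triangle spanned by three of the $c$ colors is also a rainbow transitive triangle in the full family. Assume for contradiction that $G_1,G_2,G_3$ are oriented graphs on $n$ vertices with $e(G_i)>n^2/3$ for each $i$ and no rainbow transitive triangle.

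For each ordered pair $(u,v)$ of distinct vertices let $\chi(u,v)\subseteq\{1,2,3\}$ be the set of colors of arcs $u\to v$; orientedness gives $\chi(u,v)\cap\chi(v,u)=\emptyset$ and the hypothesis rewrites as $\sum_{(u,v)}|\chi(u,v)|=\sum_i e(G_i)>n^2$. The key local observation is: for every ordered triple $(a,b,c)$ of distinct vertices the three sets $\chi(a,b),\chi(b,c),\chi(a,c)$ admit no system of distinct representatives, since any such system would produce a rainbow transitive triangle on $a\to b, b\to c, a\to c$. By Hall's theorem this forces one of: some set is empty; two sets are contained in a common singleton; or all three sets are contained in a common $2$-subset of $\{1,2,3\}$. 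In every case
\[
|\chi(a,b)|+|\chi(b,c)|+|\chi(a,c)|\le 6.
\]

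Naively summing this bound over all $n(n-1)(n-2)$ ordered triples of distinct vertices gives $\sum_i e(G_i)\le 2n(n-1)$, which falls short of the required $\sum_i e(G_i)\le n^2$ by roughly a factor of two. The source of the slack is that the six local inequalities attached to an unordered triple $T=\{u,v,w\}$ are highly correlated. To close the gap, I would study the total edge multiplicity $\Phi(T):=\sum_{x\ne y\in T}|\chi(x,y)|$ and show that $\Phi(T)$ can hit its maximum value $9$ only on ``cyclic'' triples --- those on which every color induces the same directed $3$-cycle, which is exactly the $3$-vertex subgraph of the extremal construction depicted in Figure~\ref{fig:oriented}; every other direction pattern with $\Phi(T)$ large forces at least one of the six Hall-type inequalities to be strict by a definite margin. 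Summing the refined bounds over all triples should then improve the aggregate estimate to $\sum_i e(G_i)\le n^2$, contradicting the hypothesis.

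The main obstacle will be the refinement step: classifying the $3$-vertex direction patterns across the three colors, identifying which patterns can simultaneously make all six ordered Hall inequalities tight, and extracting enough slack from the non-cyclic patterns to absorb the factor-of-two gap in the naive double count. Once this local classification is in place, the global double count is a routine calculation.
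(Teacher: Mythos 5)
Your local setup is sound as far as it goes: the Hall/SDR observation does give $|\chi(a,b)|+|\chi(b,c)|+|\chi(a,c)|\le 6$ for every ordered triple, and the naive double count correctly yields $\sum_i e(G_i)\le 2n(n-1)$. The genuine gap is the refinement step you defer, and I do not believe it can be closed along the lines you describe. You need $\sum_i e(G_i)\le n^2$, which by your own double count is equivalent to bounding the \emph{average} of $\Phi(T)$ over unordered triples by $6n/(n-1)$, essentially $6$. But $9$ is the true pointwise maximum of $\Phi(T)$ (forced already by orientedness), and in the extremal construction of Figure~\ref{fig:oriented} a constant fraction (about $2/9$) of all triples are cyclic and attain $\Phi(T)=9$ exactly, while the rest sit at $6$ or $0$. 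Hence even the strongest conceivable per-triple statement --- $\Phi(T)\le 9$ with equality only for cyclic triples, and $\Phi(T)\le 9-\delta$ otherwise --- gives at best $\sum_i e(G_i)\le \tfrac{3}{2}n(n-1)$, still far from $n^2$. The missing factor is not hiding in slack on individual triples; it comes from the global fact that triples attaining $9$ must coexist with many triples far below $9$, and quantifying that trade-off (how many cyclic triples can there be, and how do they force deficient triples elsewhere) is a global problem that your proposed classification of $3$-vertex patterns does not touch.

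For contrast, the paper avoids triple-counting entirely. It takes a counterexample minimal in $n$ and proves two deletion claims: first, there is no ``thick path'' ($\ge 3$ arcs from $u$ to $v$ and $\ge 3$ arcs from $v$ to $w$), since every outside vertex then sends at most $2c$ edges into $\{u,v,w\}$ and deleting these three vertices contradicts minimality; second, using the first claim, no two vertices are joined by $\ge 3$ arcs, by deleting that pair. This caps the total at $2\binom{n}{2}<\tfrac{c}{3}n^2$ for $c\ge 3$. If you wish to pursue your route you would need to add a genuinely global ingredient of comparable strength; as written, the sketch does not constitute a proof.
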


The paper is organized as follows. 
In Section~\ref{sec:notation} we introduce the used notions.
In Section~\ref{sec:directed4+} we consider the rainbow Tur\'an problem for a directed triangle and at least 4 colors and show that the optimal asymptotic value of the maximum number of edges in each color follows from the bound on the total number of colored edges (Theorem~\ref{thm:directed4+}). 
In Section~\ref{sec:directed3} we deal with the remaining case of 3 colors for a directed triangle by showing that the optimal bound follows from the bound on the sum of the number of edges in any two colors (Theorem~\ref{thm:directed3}). 
Using similar case distinction and generalized theorems, in Section~\ref{sec:transitive4+} we solve the rainbow Tur\'an problem for a transitive triangle and at least 4 colors (Theorem~\ref{thm:transitive4+}) and in Section~\ref{sec:transitive3} we prove the optimal bound for a transitive triangle in the case of 3 colors (Theorem~\ref{thm:transitive3}). 
Finally, in Section~\ref{sec:oriented} we consider the analogous rainbow Tur\'an problems in the setting of oriented graphs and show the bound on the total number of colored edges (Theorem~\ref{thm:oriented_transitive}). 

\section{Notation and Preliminaries}\label{sec:notation}

A \emph{directed graphs} $G$ is a pair $(V(G), E(G))$, where $V(G)$ is a set of vertices and $E(G)$ is a set of ordered pairs of distinct vertices. 
In particular, $G$ does not contain loops or multiple edges. 
For shortening, we denote $e(G) = |E(G)|$. 
For two vertices $u, v \in V(G)$, we write $uv$ to denote the edge $(u,v) \in E(G)$, and refer to it as an edge \emph{from $u$ to $v$}. 
We refer jointly to edges $uv$ and $vu$ as edges \emph{between} $u$ and $v$. 
If $uv, vu \in E(G)$ we say that $u$ and $v$ are connected with a \emph{double edge}. 
While if $uv \in E(G)$ or $vu \in E(G)$, but $u$ and $v$ are not connected with a double edge, then we say that they are connected with a \emph{single edge}. 
An \emph{oriented graph} is a directed graph that contains no double edges. 
A \emph{directed triangle} is a directed graph on the vertex set $\{u,v,w\}$ with edges $uv$, $vw$ and $wu$, while a \emph{transitive triangle} is a directed graph on the vertex set $\{u,v,w\}$ with edges $uv$, $vw$ and $uw$.
We denote by $[c]$ the set of positive integers $\{1,2,\ldots,c\}$.

Having a sequence of directed graphs $G = (G_1, G_2, \ldots, G_c)$ on a common vertex set $V(G)$, we consider the edge set of each directed graph $G_i$, for $i \in [c]$, as \emph{edges} of $G$ in color~$i$. 
For two subsets $U, V \subset V(G)$ and $i \in [c]$ we denote by $e_i(U,V)$ the total number of edges $uv$ and $vu$ from $E(G_i)$ for all $u \in U$ and $v \in V$. 
Additionally, we denote by $e(U,V)$ the sum of $e_i(U,V)$ over all $i \in [c]$.
For brevity, if $U$ contains only one vertex $u$, we write $e_i(u,V)$ instead of $e_i(\{u\},V)$, similarly when $V$ contains only one vertex. 
We also denote by $e(G)$ the total number of edges in all colors, i.e., the sum of $e_i(G)$ over all $i \in [c]$.
For a directed graph~$F$, we say that $G$ contains a rainbow copy of $F$ if $V(F) \subset V(G)$ and there is a coloring $\varphi$ of the edges of $F$ into distinct colors such that $e \in E(G_{\varphi(e)})$ for every edge $e \in E(F)$.

%

In our bounds use the following lemma. It follows from Lemma 2.2 in \cite{ADMMS20}, but we add an independent proof for completeness. 

\begin{lemma}\label{lem:bipartMantel}
Let $A$ and $B$ be disjoint sets of vertices and $H$ be a graph on $A \cup B$ which does not contain triangles having vertices both from $A$ and $B$. Then
\[e(H) \leq \binom{|A|}{2}+ \binom{|B|}{2} + |A|.\]
\end{lemma}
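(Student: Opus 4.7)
The plan is to prove the lemma by induction on $n := |A|+|B|$, powered by the following Mantel-style observation: if $ab$ is an edge of $H$ with $a \in A$ and $b \in B$, then $a$ and $b$ can have no common neighbor in $V(H)$, since any such neighbor together with $a$ and $b$ would form a triangle using vertices from both $A$ and $B$. Hence $N_H(a) \cap N_H(b) = \emptyset$ and
\[ d_H(a) + d_H(b) \;=\; |N_H(a) \cup N_H(b)| \;\leq\; |A|+|B|. \]

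The base cases $|A| = 0$ and $|B| = 0$ are immediate, since then $H$ has no bipartite edges and $e(H) \leq \binom{|A|}{2}+\binom{|B|}{2}$. For the inductive step I would argue by contradiction: suppose $e(H) > \binom{|A|}{2}+\binom{|B|}{2}+|A|$. Then $H$ must contain at least one bipartite edge $ab$, as otherwise $e(H) \leq \binom{|A|}{2}+\binom{|B|}{2}$. Applying the induction hypothesis to $H \setminus \{a\}$, whose class sizes are $|A|-1$ and $|B|$, and using the Pascal identity $\binom{|A|-1}{2}+(|A|-1) = \binom{|A|}{2}$, a short calculation gives
\[ d_H(a) \;=\; e(H) - e(H\setminus\{a\}) \;>\; |A|, \]
so $d_H(a) \geq |A|+1$. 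An analogous application to $H \setminus \{b\}$, invoking $\binom{|B|}{2} - \binom{|B|-1}{2} = |B|-1$, yields $d_H(b) \geq |B|$.

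Summing these two lower bounds produces $d_H(a)+d_H(b) \geq |A|+|B|+1$, which contradicts the structural inequality from the first paragraph and closes the induction. I do not anticipate a substantive obstacle: the delicate point is simply that the asymmetric term $+|A|$ in the claimed bound is exactly what the Pascal identity absorbs when removing an $A$-vertex, while the same term leaves no residual when removing a $B$-vertex, together producing just enough slack to overshoot the common-neighbor inequality by exactly one unit.
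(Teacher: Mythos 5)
Your proof is correct and rests on the same key observation as the paper's --- a crossing edge lies in no triangle, so its endpoints have no common neighbour --- and both arguments are inductions that peel off the endpoint(s) of such an edge. The only difference is cosmetic: the paper removes both endpoints in a single inductive step and counts the at most $|A|+|B|-1$ lost edges directly, whereas you remove them one at a time and recast the same inequality as a contradiction with the degree-sum bound $d_H(a)+d_H(b)\leq |A|+|B|$.
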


\begin{proof}
Without loss of generality assume that $|A| \leq |B|$ as in this case the thesis is stronger.
We proceed by induction on $|A|$. 
If $|A|=0$, then $e(H) \leq \binom{|B|}{2}$ as needed. 
Now let us consider $|A|>0$ and any edge $uv \in E(H)$ with $u \in A$, $v \in B$. 
If such an edge does not exist, then $e(H) \leq \binom{|A|}{2} + \binom{|B|}{2}$.
Since edge $uv$ is not contained in any triangle, after removing vertices $u$ and $v$ we lose at most $|A|+|B|-1$ edges. The obtained graph satisfies the assumption of the lemma, so from the inductive assumption
\[e(H) \leq \binom{|A|-1}{2}+\binom{|B|-1}{2}+|A|-1 + |A|+|B|-1 = \binom{|A|}{2}+ \binom{|B|}{2} + |A|\]
as wanted.
\end{proof}

\section{Rainbow directed triangles and at least 4 colors}\label{sec:directed4+}

In this section we consider $c \geq 4$ directed graphs $G_1, G_2, \ldots, G_c$ on a common vertex set and forbid rainbow directed triangles.
The following theorem provides the optimal lower bound for the number of edges in all colors that guarantees existence of a  rainbow directed triangle, and implies the part of Theorem~\ref{cor:4+} for a rainbow directed triangle.

\begin{theorem} \label{thm:directed4+}
Let $G_1, G_2, \ldots, G_c$ be $c \geq 4$ directed graphs on a common set of $n$ vertices.
If $\sum_{i = 1}^c e(G_i) > \frac{c}{2}n^2$, then there exists a rainbow directed triangle.
\end{theorem}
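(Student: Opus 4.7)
The plan is to prove the contrapositive: assuming no rainbow directed triangle exists, I will show $\sum_{i=1}^{c} e(G_i) \leq \frac{c}{2} n^2$. The key classical fact we rely on is that any $n$-vertex directed graph without a directed triangle has at most $\lfloor n^2/2 \rfloor$ edges, attained by the complete balanced bipartite graph with every undirected edge doubled; call this fact $(\star)$.

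I would first reduce to a structural statement about the union directed graph $D = G_1 \cup \cdots \cup G_c$. Since each ordered edge of $D$ lies in at most $c$ of the $G_i$, we have $\sum_i e(G_i) \leq c \cdot |E(D)|$, so the hypothesis $\sum_i e(G_i) > \frac{c}{2} n^2$ forces $|E(D)| > n^2/2$, and by $(\star)$ the graph $D$ contains a directed triangle $u \to v \to w \to u$. Because each side of this triangle is an edge of $D$, the three color sets $S(u,v),\, S(v,w),\, S(w,u)$ are all nonempty. The no-rainbow condition is exactly the failure of a system of distinct representatives for these three sets, so by Hall's theorem some subfamily has union smaller than its size; since none of the three sets is empty, this failing subfamily has size $2$ or $3$. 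Together with $c \geq 4$, this gives $|S(u,v) \cup S(v,w) \cup S(w,u)| \leq 2$: the triangle is spanned by at most two colors, say $\{i_0, \ell_0\}$.

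Next I would propagate this local restriction to a global structural statement. For each vertex $x \notin \{u,v,w\}$, the directed triangles that $x$ can form with pairs from $\{u,v,w\}$ (via edges of $D$) must likewise each be spanned by at most two colors, by the same Hall argument. Collating these constraints partitions the vertex set into a few classes whose interaction with the fixed triangle $\{u,v,w\}$ is controlled by a specific pair of colors; within and between these classes one applies the bipartite Mantel-type bound of Lemma~\ref{lem:bipartMantel} to control the total number of colored edges and reach the desired inequality, contradicting the lower bound $\sum_i e(G_i) > \frac{c}{2} n^2$.

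The main obstacle is the final structural step. The local restriction ``each directed triangle uses at most two colors'' is weak on its own, and turning it into a global bipartite-like bound matching the extremal construction requires a careful iteration over directed triangles in $D$, handling overlaps and the various pairs of colors that arise. The symmetry of the extremal configuration (the doubled $K_{n/2,n/2}$ in every color) suggests that any deviation from bipartiteness on a single color class must force a rainbow directed triangle, and making this intuition quantitative is the heart of the argument.
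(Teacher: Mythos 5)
Your proposal has two genuine problems. First, the deduction that a directed triangle $u\to v\to w\to u$ of the union digraph $D$ with no rainbow colouring must satisfy $|S(u,v)\cup S(v,w)\cup S(w,u)|\le 2$ is incorrect: Hall's theorem only tells you that \emph{some} subfamily of $\{S(u,v),S(v,w),S(w,u)\}$ violates the marriage condition, and if that subfamily has size two (say $S(u,v)=S(v,w)=\{1\}$) the third set $S(w,u)$ may still contain every colour, so the triangle need not be spanned by two colours; the hypothesis $c\ge 4$ plays no role in Hall's theorem and cannot rescue the claim. Second, and more seriously, the entire quantitative part of the argument --- turning local colour restrictions on triangles of $D$ into the global bound $\sum_i e(G_i)\le \frac{c}{2}n^2$ --- is acknowledged as an ``obstacle'' but not carried out: the partition of $V(G)$ into classes interacting with $\{u,v,w\}$ and the intended application of Lemma~\ref{lem:bipartMantel} are never specified, and it is exactly there that all the difficulty lies. (The classical fact $(\star)$ you invoke, that a digraph with digons allowed and more than $n^2/2$ edges contains a directed triangle, is true but would itself need a proof or a reference.)

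For comparison, the paper's proof does not pass through the union digraph at all. It takes a counterexample minimal in $n$ and establishes three structural claims: (i) no pair of vertices carries a double edge in two distinct colours, proved by pairing the colours $k$ and $k+1$ cyclically, bounding $e_k(x,\{u,v\})+e_{k+1}(x,\{u,v\})\le 4$ for every outside vertex $x$, and invoking minimality on $V(G)\setminus\{u,v\}$; (ii) the auxiliary digraph $H$ of pairs joined by $c+1$ edges (with at least three in one direction) contains no directed cycle; (iii) $H$ contains no directed path and hence is empty. Each claim is closed by an edge count of the form $e(U)+e(U,V')+e(V')\le \frac{c}{2}n^2$, and once $H$ is empty every pair carries at most $c$ edges, giving $e(G)\le c\binom{n}{2}<\frac{c}{2}n^2$, the desired contradiction. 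If you wish to salvage your route you would need, at a minimum, a correct local statement about the colours available on a triangle of $D$ and an explicit counting scheme replacing your final paragraph.
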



\begin{proof}[Proof of Theorem~\ref{thm:directed4+}]
By contrary, assume that $G=(G_1, G_2, \ldots, G_c)$ is a sequence of directed graphs on a common set $V(G)$ that forms a counterexample with the smallest number of vertices. 
As Theorem \ref{thm:directed4+} clearly holds for directed graphs on one or two vertices, we know that $n \geq 3$. 

In a series of claims we restrict the structure of $G$ to obtain a contradiction.
We start with showing that any two vertices cannot be connected with double edges in more than one color.
Then we show that vertices connected with $c+1$ edges can create only a limited structure.
Consequently, we obtain a bound on the total number of edges contradicting the assumption on $G$.

\begin{claim}\label{cla:many_col_double_edges}
There are no vertices $u$ and $v$ such that $e_i(u, v) = 2$ and $e_j(u, v) = 2$ for some $i, j \in [c]$, $i \neq j$.
\end{claim}

\begin{proof}
By contradiction let us assume that there exist vertices $u, v$  and colors $i, j \in [c]$, $i \neq j$ such that $uv \in E(G_i) \cap E(G_j)$ and $vu \in E(G_j) \cap E(G_i)$. Without loss of generality we may assume that $i = 1$ and $j = 3$.

Fix any $k \in [c-1]$ and take an arbitrary $x \in V'= V(G) \setminus \{u, v\}$. 
If $xu \in E(G_k)$, then $vx \notin E(G_{k+1})$ as otherwise it creates a  rainbow directed triangle with an edge uv in color $1$ or $3$. 
Analogically, if $ux \in E(G_k)$, then $xv \notin E(G_{k+1})$. 
The same holds if we interchange $k$ and $k+1$. 
It implies that
\[ e_k(x, \{u, v\}) + e_{k+1}(x, \{u, v\}) \leq 4.\]
Using the same argument we have
\[ e_c(x, \{u, v\}) + e_{1}(x, \{u, v\}) \leq 4.\]
Summing up such inequalities for every $x \in V'$ and $k \in [c-1]$ we obtain
\[2 e(V', \{u, v\}) \leq 4c(n-2).\]
This implies that
\[e(G) = e(\{u,v\}) + e(\{u, v\},V') + e(V') \leq 2c + 2c(n-2) + \frac{c}{2}(n-2)^2 = \frac{c}{2}n^2,\]
which is a contradiction.
\end{proof}

This claim implies that between any two vertices there are at most $c + 1$ edges, and only one of them can be a double edge. 
We now focus on the structure created by pairs of vertices reaching this maximum. 
Let $H$ be the directed graph on $V(G)$, in which $uv \in E(H)$ if and only if $e(u,v)=c+1$ and there are at least 3 edges from $u$ to $v$. 
Note that, since $c \geq 4$, every pair of vertices connected by $c+1$ edges in $G$ is forming a directed edge in $H$, and there might be edges $uv$ and $vu$ in $H$ at the same time. 
The following claim will be useful for further considerations.

\begin{claim} \label{cla:many_col_c+1_edge}
If $uv, uw \in E(H)$ or $vu, wu \in E(H)$ for some $u, v, w \in V(G)$, then $e(v, w) \leq 2$.
\end{claim}

\begin{proof}
Assume that $uv$ and $uw$ are edges in $H$. 
This means that there exist $i \in [c]$ such that $wu \in E(G_i)$ and there are at least two edges $uv$ in $G$ in colors different than $i$. 
To avoid having a rainbow directed triangle, it implies there can be at most one edge $vw$ in $G$ (in color~$i$).
From a symmetric argument, there is at most one edge $wv$. 

The case of edges $vu$ and $wu$ in $H$ can be proven analogically.
\end{proof}

The next step is to analyze the structure of $H$. 

\begin{claim}\label{cla:many_col_cycle}
Directed graph $H$ does not contain any directed cycle.
\end{claim}

\begin{proof}
Assume the contrary and let $v_1 \ldots v_{k} v_1$ be the shortest directed cycle in $H$. 
We denote $U=\{v_1, v_2, \ldots, v_{k}\}$ and consider the indices modulo $k$.
The minimality implies that non-consecutive vertices of the cycle are not connected in $H$, so when $k>2$ in~$G$ we have 
\[e(U) \leq (c+1)k + c \left(\binom{k}{2}-k \right)  = c\binom{k}{2}+k \leq \frac{c}{2}k^2.\]
Note that the same upper bound holds if $k=2$.
If $k = n$, then the above inequality gives $e(G) \leq \frac{c}{2}n^2$, which is a contradiction. Thus, $k<n$. 

Consider an arbitrary vertex $w \in V' = V(G) \setminus U$.
If there are no edges in $H$ between $w$ and $U$, then in~$G$ we have $e(w, U) \leq ck$. 
We show that this bound holds also if there are some edges in $H$ between $w$ and the cycle. 

If $k=2$, then Claim~\ref{cla:many_col_c+1_edge} immediately implies that $e(w,U) \leq c+3 \leq ck$ as needed, while $k=3$ gives a rainbow directed triangle, so let $k \geq 4$. 
If for some $i,j \in [k]$, $i \not=j$ we have edges $v_iw$ and $wv_j$ in $H$, then, to avoid contradiction with the minimality of the cycle, $j=i+2$ and there are no edges in $H$ between $w$ and~$U \setminus \{v_i, v_j\}$. 
From Claim~\ref{cla:many_col_c+1_edge} there are at most two edges in $G$ between $w$ and $v_{i+1}$, so 
\[e(w,U) \leq 2(c+1)+2+c(k-3) = ck-c+4 \leq ck,\]
as needed. 
We are left with the case that all edges in $H$ between $w$ and $U$ have the same direction. 
Without loss of generality assume that $w$ has edges in $H$ only from $U$ and let $I = \{i \in [k] : v_iw \in E(H)\}$. 
For each $i \in I$ Claim~\ref{cla:many_col_c+1_edge} implies that there are at most two edges in $G$ between $w$ and $v_{i+1}$, so $e(w, \{v_i, v_{i+1}\}) \leq c + 3$.
This also means that denoting $m = |I|$, we have $m \leq \lfloor \frac{k}{2} \rfloor$ and
\[e(w, U) \leq (c+3)m + c(k - 2m) = ck + (3-c)m \leq ck.\]
Therefore, indeed always $e(w, U) \leq ck$.

Summing up such inequalities for all $w \in V'$ we obtain $e(V', U) \leq ck(n-k)$.
This implies that
\[e(G) = e(U) + e(U, V') + e(V') \leq \frac{c}{2}k^2 + ck(n-k) + \frac{c}{2}(n-k)^2 = \frac{c}{2}n^2,\]
which gives a contradiction.
\end{proof}

The above claim allows to show that there are no edges in $H$ at all. 

\begin{claim}\label{cla:many_col_path}
Directed graph $H$ is empty.
\end{claim}

\begin{proof}
Assume the contrary and let $v_1 v_2 \ldots v_p$ be the longest directed path in $H$. 

We start with analyzing the edges between vertices in $P = \{v_1, v_2, \ldots, v_p\}$. 
For any $i \in [p-1]$ it holds $e(v_i, v_{i+1}) = c+1$. 
For every $1 \leq i < j \leq p$ we have $v_j v_i \notin E(H)$ as otherwise it contradicts Claim~\ref{cla:many_col_cycle}. 
If $v_i v_j \notin E(H)$, then $e(v_i, v_j) \leq c$.  
While if $v_i v_j \in E(H)$ for $i < j-1$, then Claim~\ref{cla:many_col_c+1_edge} implies $e(v_i, \{v_{j-1}, v_j\}) \leq c+3$. 
Let $m =|\{(i, j) \in [p]^2: i < j-1, v_i v_j \in E(H)\}|$ and note that $m \leq \frac{1}{2} \left(\binom{p}{2} - p + 1 \right)$ and
\begin{align*}
e(P) &\leq (c+1)(p-1) + (c+3)m + c\left(\binom{p}{2} - p + 1 - 2m\right)\\
     &= c\binom{p}{2} + (3-c)m + p - 1 \leq \frac{c}{2}p^2.
\end{align*}

Observe that $p < n$ as otherwise there are fewer edges than assumed. 
Consider an arbitrary vertex $u \in V' = V \setminus P$.
If there are no edges in $H$ between $u$ and $P$, then in $H$ we have $e(u,P) \leq cp$.
If $v_iu \in E(H)$ for some $i \in [p]$, then from maximality of the path we have $i<p$, while from Claim~\ref{cla:many_col_c+1_edge} there are at most two edges between $v_{i+1}$ and $u$ in $G$, which implies $e(u, \{v_i, v_{i+1}\}) \leq c+3$.
Similarly, if $uv_j \in E(H)$ for some $j \in [p]$, then $j>1$ and $e(u, \{v_{j-1}, v_j\}) \leq c+3$.
Moreover, Claim~\ref{cla:many_col_cycle} implies that if $v_iu \in E(H)$ and $uv_j \in E(H)$, then $i<j$. 
Denote $I = \{i \in [p]: v_iu \in E(H)\}$, $J = \{j \in [p]: uv_j \in E(H)\}$ and $m = |I|+|J|$.
Note that Claim~\ref{cla:many_col_cycle} implies that $i<j$ for every $i \in I$ and $j \in J$, and so it may happen that $i+1=j-1$ only for at most one pair of $i$ and $j$. 
If $i+1\not=j-1$ for every $i \in I$ and $j \in J$, then $m \leq p/2$ and
\[e(u,P) \leq m(c+3) + (p-2m)c = cp - (c-3)m \leq cp.\]  
Otherwise, $2 \leq m \leq (p+1)/2$ and
\[e(u,P) \leq (m-1)(c+3) + (c+1) + (p-2m+1)c = cp - (c-3)(m-1) +1 \leq cp.\] 

Thus, the total number of edges in $G$ satisfies
\[ e(G) = e(P) + e(P, V') + e(V') \leq \frac{c}{2}p^2 + cp(n-p) + \frac{c}{2}(n-p)^2 = \frac{c}{2}n^2,\]
which is a contradiction with the assumed number of edges. 
\end{proof}

The above claim means that between any two vertices in $G$ there are at most $c$ edges, so 
\[e(G) \leq c\binom{n}{2} < \frac{c}{2}n^2,\]
which gives a contradiction and finishes the proof of Theorem~\ref{thm:directed4+}.
\end{proof}

\section{Rainbow directed triangles and 3 colors}\label{sec:directed3}

In case of 3 colors, Theorem~\ref{thm:directed4+} cannot hold, because if $G_1$ and $G_2$ are directed graphs with all pairs of vertices connected with double edges and $G_3$ is empty, then for large enough $n$
\[\sum_{i=1}^3 e(G_i) = 2n(n-1) > \frac{3}{2}n^2\]
and there is no rainbow directed triangle. In this case we prove the following theorem implying Theorem~\ref{cor:directed3}. 

\begin{theorem}\label{thm:directed3}
Let $G_1$, $G_2$, $G_3$ be three directed graphs on a common set of $n$ vertices. If $e(G_i)+e(G_j)\geq\frac{10}{9}n^2$ for $1 \leq i < j \leq 3$, then there exists a rainbow directed triangle.
\end{theorem}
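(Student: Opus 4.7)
Following the minimum-counterexample strategy used for Theorem~\ref{thm:directed4+}, suppose $G = (G_1, G_2, G_3)$ on the smallest possible number of vertices $n$ satisfies $e(G_i) + e(G_j) \geq \frac{10}{9}n^2$ for every pair $1 \leq i < j \leq 3$ yet contains no rainbow directed triangle. The statement is immediate for small $n$, so $n$ can be taken sufficiently large. The first step is a vertex-deletion reduction: if some vertex $v$ is such that for every pair $i \neq j$ we have $d_i(v) + d_j(v) := e_i(v, V(G) \setminus \{v\}) + e_j(v, V(G) \setminus \{v\}) \leq \frac{10}{9}(2n - 1)$, then removing $v$ preserves the pairwise hypothesis on the smaller graph, contradicting minimality. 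Hence every vertex is ``heavy'' in every pair of colors.

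The second step is a heavy-pair analysis, analogous to Claims~1--4 in the proof of Theorem~\ref{thm:directed4+} but genuinely more involved. In the extremal construction of Figure~\ref{fig:directed3} two local patterns coexist: pairs inside a part $A_i$ carry $4$ edges (two colors, both directions), while pairs across parts carry $3$ edges (all three colors, one direction). The goal is to show that no pair of vertices carries more than $4$ edges. For instance, if $u \to v$ exists in all three colors, then for any third vertex $w$ the color sets of edges $wu$ and $vw$ would immediately close a rainbow copy of $u \to v \to w \to u$ unless they are very restricted; a finite case analysis of every $5$- and $6$-edge configuration between $u$ and $v$, each time extracting a per-vertex saving on the $n-2$ remaining vertices, will force the total edge count below $\frac{5}{3}n^2$, contradicting the pairwise hypothesis summed over the three pairs of colors.

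The final step is the global counting. Once every pair has at most $4$ edges, classify pairs as ``two-color, both directions'' (missing exactly one color, contributing up to $4$ edges) or ``one-directional'' (contributing up to $3$ edges), plus smaller degenerate types. For each potential ``missing color'' $k$, view the pairs that miss $k$ as edges of an auxiliary graph $H_k$; the rainbow-free condition, together with the cyclic orientation of the $3$-color triangles across parts, forces $H_k$ to avoid certain triangles straddling a bipartition of $V(G)$, so that Lemma~\ref{lem:bipartMantel} applies. Combining the three $H_k$ bounds and summing then yields $e(G_i) + e(G_j) \leq \frac{10}{9}n^2$ for some pair, the desired contradiction, with equality approached only by the construction of Figure~\ref{fig:directed3}.

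The main obstacle I expect is exactly the heavy-pair analysis of the second step: since $\frac{5}{3}n^2$ exceeds the $c=3$ substitution $\frac{3}{2}n^2$ of Theorem~\ref{thm:directed4+}, the ``at most $c+1$ edges between any two vertices'' bound that drove the previous proof is no longer enough to close the argument. One must extract an extra $\Theta(n^2)$ of savings from the $3$-color combinatorics near extremality, and because the extremal graph mixes two qualitatively different local patterns the case analysis does not collapse as cleanly as in the $c \geq 4$ regime.
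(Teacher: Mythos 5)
Your high-level strategy (minimal counterexample, local structure of heavy pairs, an auxiliary graph fed into Lemma~\ref{lem:bipartMantel}) points in the right direction, but the two steps that would carry the proof each have a genuine gap. The central one is your second step: you cannot reduce to ``no pair of vertices carries more than $4$ edges.'' The paper only proves $e(x,y)\leq 5$ (by deleting the pair $\{x,y\}$ and checking that the hypothesis survives on $n-2$ vertices), and this is essentially best possible for that kind of argument: for a $5$-edge pair, say with double edges in colors $1,2$ and a single edge in color $3$, a third vertex $w$ can still carry $6$ of the $8$ possible edges in colors $1$ and $2$ to $\{u,v\}$ (this is exactly Claim~\ref{cla:3oriented_bound2edge}), and the resulting loss of at most $6(n-2)+4$ edges in colors $1{+}2$ is too large for $\frac{10}{9}n^2-(6n-8)\geq\frac{10}{9}(n-2)^2$ to hold for $n\geq 3$. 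So the ``per-vertex saving'' you hope to extract is not there, and $5$-edge pairs cannot be deleted away. The paper instead keeps them: they are precisely the pairs collected into the matching $X$, which is the \emph{dominant} structural class (in the extremal configuration of Figure~\ref{fig:directed3} the matching $X$ covers all of $V(G)$), and they must be carried through the entire global count.

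Your third step is also not a proof as stated, and it diverges from what actually closes the argument. The paper does not build one auxiliary graph per missing color; it builds three nested maximal matchings $X,Y,Z$ (pairs with $5$, $4$, and $3$ edges of prescribed type) plus a residue $R$, cleans up $G$ by deleting at most $\frac12 n$ edges, and then proves \emph{two} families of counting inequalities: one for each pair of colors (inequality~(\ref{eq:3oriented_exact2bound})) and one for the total number of edges (inequality~(\ref{eq:3oriented_exact3bound}), where Lemma~\ref{lem:bipartMantel} is applied to a graph whose vertices are the matched pairs of $X\cup Y$ together with $R$). The contradiction comes only from feeding both families into a constrained optimization in four variables whose unique feasible point is the configuration of Figure~\ref{fig:directed3}. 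Deriving $e(G_i)+e(G_j)<\frac{10}{9}n^2$ for some pair directly from pairwise counts, as you propose, does not appear to work: the pairwise inequalities alone are compatible with configurations that only the total-edge inequality excludes. (A smaller point: your vertex-deletion step only yields that every vertex is heavy in \emph{some} pair of colors, not in every pair, and in any case this reduction plays no role in the paper's argument.)
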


One can slightly improve the bound in Theorem~\ref{thm:directed3} by subtracting a linear order term, but this would require assuming that $n$ is large enough to avoid constructions with a smaller quadratic term. For example in the construction depicted in Figure~\ref{fig:K_n,n} there is no rainbow triangle and in any two colors we have $n^2$ edges. This is a larger number of edges than in the construction in Figure~\ref{fig:directed3} if $n<12$.

\begin{proof}[Proof of Theorem~\ref{thm:directed3}]
By contradiction, assume that $G=(G_1, G_2, G_3)$ is a sequence of directed graphs on a common set $V(G)$ that forms a counterexample with the smallest number of vertices. 

The main idea of the proof is to split the set $V(G)$ into disjoint sets restricting the colors on edges inside and between the sets. 
Then, bound the total number of edges and the number of edges in each pair of colors in terms of the sizes of the defined set. 
This way we obtain an optimization problem on 4 variables with linear or quadratic functions, which has a unique solution giving exactly the structure depicted in Figure \ref{fig:directed3}.
We start with the following claim. 

\begin{claim}\label{cla:perfect_matching}
For every $x,y \in V(G)$ it holds $e(x,y) \leq 5$.
\end{claim}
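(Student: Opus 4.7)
The plan is to argue by contradiction, using the minimality of the chosen counterexample. Suppose there exist $x, y \in V(G)$ with $e(x,y) = 6$, that is, both edges $xy$ and $yx$ are present in each of the three colors. I will show that after removing $x$ and $y$, the resulting system on $V' := V(G) \setminus \{x,y\}$ still satisfies the hypothesis of the theorem for every pair of colors; by the minimality of $G$ this yields a rainbow directed triangle inside $V'$, and hence inside $G$, a contradiction.

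For any third vertex $z \in V'$, the triple $\{x,y,z\}$ spans exactly two directed triangles: $x \to y \to z \to x$ (using $xy, yz, zx$) and $x \to z \to y \to x$ (using $xz, zy, yx$). Since $xy$ is present in all three colors, a rainbow copy of the first triangle arises whenever there exist $i \neq j$ with $yz \in E(G_i)$ and $zx \in E(G_j)$. Hence the colour sets $A := \{i : zx \in E(G_i)\}$ and $B := \{i : yz \in E(G_i)\}$ must satisfy one of the three alternatives: $A = \emptyset$, $B = \emptyset$, or $A = B = \{k\}$ for some single colour $k$. The analogous trichotomy holds for $C := \{i : xz \in E(G_i)\}$ and $D := \{i : zy \in E(G_i)\}$ coming from the second triangle.

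From these three cases a short check shows that for any two distinct colours $i, j$ both $|A \cap \{i,j\}| + |B \cap \{i,j\}| \leq 2$ and $|C \cap \{i,j\}| + |D \cap \{i,j\}| \leq 2$, so $e_i(z, \{x,y\}) + e_j(z, \{x,y\}) \leq 4$. Summing over $z \in V'$ and using the trivial contribution $e_i(x,y) + e_j(x,y) = 4$, I would obtain
\[
e_i(V') + e_j(V') \;\geq\; e(G_i) + e(G_j) - 4 - 4(n-2) \;\geq\; \tfrac{10}{9}n^2 - 4n + 4 \;\geq\; \tfrac{10}{9}(n-2)^2,
\]
where the last inequality reduces to $\tfrac{4n}{9} \geq \tfrac{4}{9}$, true for every $n \geq 1$.

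Therefore the restriction of $(G_1, G_2, G_3)$ to $V'$, on $n-2$ vertices, satisfies the hypothesis of Theorem~\ref{thm:directed3} for every pair of colours. When $n-2 \geq 3$, minimality of $G$ produces a rainbow directed triangle inside $V'$, which is also a rainbow directed triangle in $G$, a contradiction. When $n-2 \leq 2$, the derived lower bound already exceeds the trivial maximum of $e_i(V') + e_j(V')$ (at most $2\binom{n-2}{2}+2(n-2)$), and the contradiction is immediate. The one piece of real work is the short case analysis leading to the $\leq 4$ bound on $e_i(z, \{x,y\}) + e_j(z, \{x,y\})$; the rest is a linear edge count and the elementary inequality above.
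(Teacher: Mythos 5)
Your proof is correct and follows essentially the same route as the paper's: assume $e(x,y)=6$, show that every third vertex sends at most $4$ edges in any two colours $i,j$ to $\{x,y\}$, deduce that deleting $x$ and $y$ preserves the hypothesis of Theorem~\ref{thm:directed3}, and invoke minimality of the counterexample. The only cosmetic difference is that the paper justifies the per-vertex bound by pairing each colour-$i$ edge with a distinct excluded colour-$j$ edge, while you derive it from a trichotomy on the colour sets of the four edges at $z$; these are the same observation.
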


\begin{proof}
Assuming the contrary, notice that for any $1\leq i < j \leq 3$ any vertex $v \in V(G) \setminus \{x,y\}$ satisfies $e_i(v,\{x,y\}) + e_j(v,\{x,y\}) \leq 4$. It is so, because any edge in $G_i$ between $v$ and $\{x,y\}$ excludes a possibility of an edge in $G_j$ between $v$ and $\{x,y\}$, and no two edges exclude the same edge. 
This implies that
\begin{align*}
e_i(G\setminus\{x,y\}) + e_j(G\setminus\{x,y\}) &\geq e_i(G) + e_j(G) - 4(n-2) - 4 \\
&\geq \frac{10}{9}n^2 - 4n + 4 \\
&\geq \frac{10}{9}(n-2)^2.
\end{align*}
Hence, by the minimality of $G$, $G\setminus\{x,y\}$ contains a rainbow directed triangle, so there exists a rainbow directed triangle in $G$.
\end{proof}

This claim implies that $G$ does not contain a pair of vertices connected with double edges in all three colors. Using this fact, we split $V(G)$ into many sets depicted in Figure~\ref{fig:setsXYZR}.

\vspace{-0.5cm}
\begin{figure}[ht]
\centering

\includegraphics[scale=1]{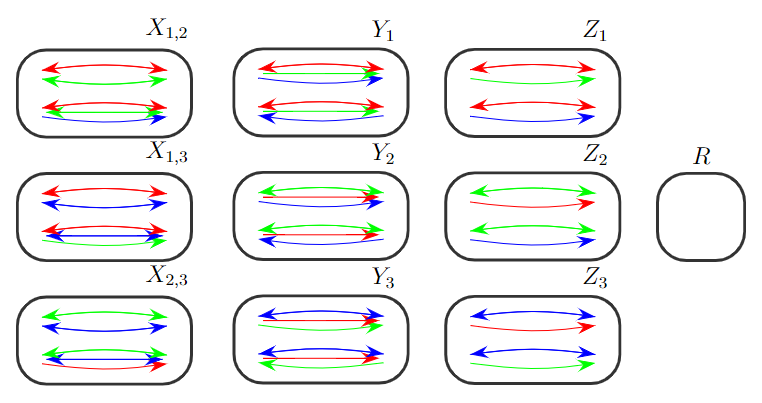}
\vspace{-0.4cm}

\caption{Split of $V(G)$ based on maximal matchings in double edges.}
\label{fig:setsXYZR}
\end{figure}

Firstly, consider a maximal matching~$X$ consisting of double edges in two colors (there might be a single edge in the third color). 
For $1\leq i < j \leq 3$ let $X_{ij}$ be the set of pairs in~$X$ with double edges in colors $i$ and $j$. Denote $x_{ij} = 2|X_{ij}|/n$ to be the fraction of vertices in~$X_{ij}$. 

Secondly, in the set $V(G) \setminus X$ consider a maximal matching~$Y$ consisting of pairs of vertices connected with 4 edges. 
From the maximality of $X$, each pair in $Y$ is connected with a double edge in exactly one color. 
For $1 \leq i \leq 3$, let $Y_i$ be the set of pairs in $Y$ with a double edge in color $i$. Denote $y_{ij} = (|Y_i| + |Y_j|)/n$ for $1\leq i < j \leq 3$.

Finally, in $V(G) \setminus (X \cup Y)$ consider a maximal matching~$Z$ consisting of pairs of vertices connected with a double edge in one color and an edge in a different color. For $1 \leq i \leq 3$, let $Z_i$ be the set of pairs in $Z$ with a double edge in color $i$. Similarly as before, denote $z_{ij} = (|Z_i| + |Z_j|)/n$ for $1\leq i < j \leq 3$.

From the maximality of $Z$, each pair of vertices in $R = V(G) \setminus (X \cup Y \cup Z)$ is connected with at most 2 edges in colors $i$ and $j$ for any $1\leq i < j \leq 3$. Let $r = |R|/n$. Notice that 
$$\sum_{1 \leq i < j \leq 3} x_{ij} + y_{ij} + z_{ij} + r = 1.$$

The following claim will be often used in order to prove useful bounds on the number of edges in particular colors.

\begin{claim}\label{cla:3oriented_bound2edge}
For any $1 \leq i < j \leq 3$, $k \in [3]$ different from $i$ and $j$, and every distinct vertices $u, v, w \in V(G)$:
\begin{itemize}[itemsep=0pt, topsep=4pt]
\item if $e_k(u,v)=1$ then $e_i(w, \{u,v\}) + e_j(w, \{u,v\}) \leq 6$, 
\item if $e_k(u,v)=2$ then $e_i(w, \{u,v\}) + e_j(w, \{u,v\}) \leq 4$.
\end{itemize}
\end{claim}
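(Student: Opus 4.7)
My plan is to proceed by a direct case analysis on which color-$k$ edges are present between $u$ and $v$, enumerating which combinations of edges in colors $i$ and $j$ between $w$ and $\{u,v\}$ would complete a rainbow directed triangle on $\{u,v,w\}$. The eight candidate edges counted by $e_i(w,\{u,v\}) + e_j(w,\{u,v\})$ are the directed edges $uw, wu, vw, wv$ in each of the colors $i$ and $j$; the goal is to bound how many of these can be simultaneously present without creating a forbidden rainbow directed triangle.

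For the first bullet, I may assume without loss of generality that $uv \in E(G_k)$, as the case $vu \in E(G_k)$ is fully symmetric. The only rainbow directed triangles on $\{u,v,w\}$ using this edge take the form of the directed $3$-cycle $u \to v \to w \to u$ with the remaining two edges in colors $\{i,j\}$ in either assignment, producing exactly two forbidden configurations: $\{vw \in G_i,\ wu \in G_j\}$ and $\{vw \in G_j,\ wu \in G_i\}$. These two pairs involve four pairwise distinct candidate edges, and each configuration forces at least one of its two edges to be absent, so at least two of the eight candidates are missing, giving the bound $e_i(w,\{u,v\}) + e_j(w,\{u,v\}) \leq 6$.

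For the second bullet, both $uv$ and $vu$ lie in $E(G_k)$, so from the additional directed triangle $v \to u \to w \to v$ I obtain two further forbidden configurations, $\{uw \in G_i,\ wv \in G_j\}$ and $\{uw \in G_j,\ wv \in G_i\}$, on top of the two already noted. The key observation is that the resulting four forbidden pairs are pairwise disjoint and together exhaust all eight candidate edges, so at least one edge must be missing from each of the four pairs, giving the bound $e_i(w,\{u,v\}) + e_j(w,\{u,v\}) \leq 4$.

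No serious obstacle arises; the argument is essentially bookkeeping. The only point that requires care is verifying that in the second case the four forbidden pairs are indeed pairwise disjoint and exactly cover the eight candidate edges, which is what allows the per-pair deficits to be summed without overcounting and produces the sharp bound of $4$.
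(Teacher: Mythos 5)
Your proof is correct and follows essentially the same route as the paper: the presence of $uv\in E(G_k)$ rules out the two disjoint pairs $\{vw\in G_i, wu\in G_j\}$ and $\{vw\in G_j, wu\in G_i\}$, costing at least two of the eight candidate edges, and the reverse edge $vu\in E(G_k)$ symmetrically costs two more from the complementary four candidates. The paper's proof is just a terser statement of this same bookkeeping.
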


\begin{proof}
If $uv \in E(G_k)$, then the lack of rainbow directed triangles in $G$ implies that an edge $vw$ in color $i$ (or $j$) forbids an edge $wu$ in color $j$ (or $i$). Thus, at least 2 edges $vw$ and $wu$ in colors $i$ and $j$ are missing. This implies the first bullet point.
A symmetric argument in the case $vu \in E(G_k)$ implies that at least 2 edges $wv$ and $uw$ in colors $i$ and $j$ are missing, which shows the second bullet point.
\end{proof}

Note that for each pair of vertices $(u, v) \in X$ there can be at most one vertex in $V(G) \setminus X$, which is connected with both $u$ and $v$ with double edges in two colors, as otherwise it contradicts the maximality of $X$. 
Thus, by removing at most $|X|$ edges, we can guarantee that each vertex in $V(G) \setminus X$ is connected with double edges in two colors with at most one vertex in each pair in $X$.
Similarly, by removing at most $|Y|$ edges we can achieve that each vertex in $V(G) \setminus (X \cup Y)$ is connected with $4$ edges to at most one vertex in each pair in $Y$.
While by removing at most $|Z|$ edges we can guarantee that each vertex in $R$ is connected with a double edge in one color and an edge in a different color to at most one vertex in each pair in~$Z$.
Let $G'$ be the graph obtained in the above process.
Note that we cannot remove exactly $\frac{1}{2}n$ edges, so $e(G') > e(G) - \frac{1}{2}n$.

Our goal is to prove the bound
\begin{align}\label{eq:3oriented_exact2bound}
e_i(G') + e_j(G') \leq 2\binom{n}{2} + \left(x_{ij}+\frac{4}{3}y_{ij}+\frac{3}{2}z_{ij}+\frac{3}{4}r\right)^2n^2 - \left(\frac{1}{2}z_{ij}+\frac{3}{4}r\right)^2n^2+\frac{1}{2}n.    
\end{align}
This inequality translates to the following bounds on the number of edges in colors $i$ and $j$ in $G'$ between the respective sets:
\begin{itemize}[itemsep=0pt, topsep=4pt]
\item $16$ edges between different pairs of vertices in $X_{ij}$;
\item $13\frac{1}{3}$ edges between a pair of vertices in $X_{ij}$ and a pair of vertices in $Y_{i} \cup Y_{j}$;
\item $14$ edges between a pair of vertices in $X_{ij}$ and a pair of vertices in $Z_{i} \cup Z_{j}$;
\item $11\frac{5}{9}$ edges between different pairs of vertices in $Y_{i} \cup Y_{j}$;
\item $12$ edges between a pair of vertices in $Y_{i} \cup Y_{j}$ and a pair of vertices in $Z_{i} \cup Z_{j}$;
\item $12$ edges between different pairs of vertices in $Z_{i} \cup Z_{j}$;
\item $8$ edges between any other pairs of distinct vertices in $X \cup Y \cup Z$;
\item $7$ edges between a pair of vertices in $X_{ij}$ and a vertex in $R$;
\item $6$ edges between a pair of vertices in $Y_{i} \cup Y_{j}$ and a vertex in $R$;
\item $5\frac{1}{2}$ edges between a pair of vertices in $Z_{i} \cup Z_{j}$ and a vertex in $R$;
\item $4$ edges between any other pair of vertices in $X \cup Y \cup Z$ and a vertex in $R$;
\item $2$ edges between any different vertices in $R$.
\end{itemize}

Of course not all bounds are optimal. The coefficients in $(\ref{eq:3oriented_exact2bound})$ were chosen to create as simple inequality as possible, that is good enough for the proof. 

In order to show that inequality $(\ref{eq:3oriented_exact2bound})$ is satisfied, we prove separately for each bullet point that the respective bound holds.  
The bound of $16$ edges in colors $i$ and $j$ between pairs of vertices in $X_{ij}$ is obvious as there can be at most 4 such edges between any two vertices. 
Together with the definition of $G'$ it also gives the bound of $14$ edges between a pair of vertices in $X_{ij}$ and a pair of vertices in $Z_{i} \cup Z_{j}$.
Claim~\ref{cla:3oriented_bound2edge} gives that any vertex has at most 6 edges in colors $i$ and $j$ to a pair in $Y_{i} \cup Y_{j}$, which implies the bound of $12$~edges between a pair of vertices in $Y_{i} \cup Y_{j}$ and any pair in $X \cup Y \cup Z$. 
In order to fulfill the needed bound of $11$ edges between different pairs of vertices in $Y_{i} \cup Y_{j}$ one needs to additionally notice that $6$~edges in colors $i$ and $j$ between a vertex $w$ and an edge $uv$ in the third color can be achieved only when there are all $4$ edges $uw$ and $wv$ in colors $i$ and $j$, so $12$ edges between different pairs of vertices in $Y_{i} \cup Y_{j}$ requires having a double edge in both colors $i$ and $j$ contradicting the maximality of $X$.
The bound of $12$ edges between different pairs of vertices in $Z_{i} \cup Z_{j}$ also follows from the maximality of $X$ as there can be at most $3$ edges in colors $i$ and $j$ between any two vertices in $V(G)\setminus X$. 
Since any pair of vertices in $X \cup Y \cup Z$ not listed in the fist six bullet points contains a double edge in the third color, Claim~\ref{cla:3oriented_bound2edge} implies the needed bound of $8$ edges. 

Bounds between a~vertex in $R$ and a pair of vertices in $X_{ij} \cup Y_i \cup Y_j \cup Z_i \cup Z_j$ follow directly from the definition of sets $X$, $Y$, $Z$ and the definition of $G'$, while the bound of $4$ edges between a vertex in $R$ and other pairs in $X \cup Y \cup Z$ follows from Claim~\ref{cla:3oriented_bound2edge}. 
The bound between vertices in $R$ is immediate as there can be at most 2~edges in colors $i$ and $j$ between such two vertices. 
Finally, the number of edges between the two vertices in each pair in $X\cup Y \cup Z$ is properly bounded thanks to the additional linear term $\frac{1}{2}n$. 

Since $G'$ was constructed from $G$ by removing less than $\frac{1}{2}n$ edges, then also 
\[e_i(G') + e_j(G') > e_i(G)+e_i(G)-\frac{1}{2}n \geq \frac{10}{9}n^2-\frac{1}{2}n.\] 
This, together with (\ref{eq:3oriented_exact2bound}), implies that
\begin{align}\label{eq:3oriented_2bound}
\left(x_{ij}+\frac{4}{3}y_{ij}+\frac{3}{2}z_{ij}+\frac{3}{4}r\right)^2 - \left(\frac{1}{2}z_{ij}+\frac{3}{4}r\right)^2 > \frac{1}{9}.  \end{align}

Now, in a similar way, we want to bound the total number of edges in $G'$.
The maximum possible number of edges between different pairs of vertices in $X \cup Y \cup Z$ and vertices in $R$ is presented in the table below. 

\begin{center}
\begin{tabular}{|c|c|c|c|c|c|c|c|c|c|c|} 
 \hline
 & $X_{12}$ & $X_{13}$ & $X_{23}$ & $Y_1$ & $Y_2$ & $Y_3$ & $Z_1$ & $Z_2$ & $Z_3$ & $R$ \\ \hline 
 $X_{12}$ & 16 & 12 & 12 & 12 & 12 & 12 & 14 & 14 & 12 & 7 \\ \hline 
 $X_{13}$ & 12 & 16 & 12 & 12 & 12 & 12 & 14 & 12 & 14 & 7 \\ \hline 
 $X_{23}$ & 12 & 12 & 16 & 12 & 12 & 12 & 12 & 14 & 14 & 7 \\ \hline 
 $Y_1$ & 12 & 12 & 12 & 13 & 12 & 12 & 13 & 12 & 12 & 7 \\ \hline 
 $Y_2$ & 12 & 12 & 12 & 12 & 13 & 12 & 12 & 13 & 12 & 7 \\ \hline 
 $Y_3$ & 12 & 12 & 12 & 12 & 12 & 13 & 12 & 12 & 13 & 7 \\ \hline
 $Z_1$ & 14 & 14 & 12 & 13 & 12 & 12 & 12 & 12 & 12 & 6 \\ \hline 
 $Z_2$ & 14 & 12 & 14 & 12 & 13 & 12 & 12 & 12 & 12 & 6 \\ \hline 
 $Z_3$ & 12 & 14 & 14 & 12 & 12 & 13 & 12 & 12 & 12 & 6 \\ \hline
 $R$ & 7 & 7 & 7 & 7 & 7 & 7 & 6 & 6 & 6 & 3 \\ \hline 
\end{tabular}
\end{center}

Those bounds can be proven similarly as the bounds in~(\ref{eq:3oriented_exact2bound}), but some of them require a bit of case analysis. To facilitate their verification and avoid technicalities in the paper, we provide a python script available on ArXiv together with the preprint of this manuscript. 
If two pairs of vertices in $X \cup Y \cup Z$ contain a double edge in each color, then the bound of $12$ edges between them is implied by Claim~\ref{cla:3oriented_bound2edge}. 
All other bounds between a pair in $X \cup Y$ and a pair in $X\cup Y \cup Z$ are verified by the script. 
The bound of $12$ edges between pairs of vertices in $Z$ follows immediately from the maximality of $X$ and $Y$. 

From Claim~\ref{cla:3oriented_bound2edge} a vertex in $R$ can be connected to some pair in $X_{12}$ with at most $4$ edges in colors $2$ and $3$, and at most $4$ edges in colors $1$ and $3$. 
While from the definition of $G'$, it is connected with at most $7$ edges in colors $1$ and $2$. 
Altogether this implies that a vertex in~$R$ is connected to some pair in $X$ with at most $7$ edges. 
Also, a vertex in $R$ and a pair in~$Y$ are connected with at most $7$ edges from the definition of $G'$.
The remaining bounds of $6$~edges between a vertex in $R$ and a pair of vertices in $Z$, as well as $3$ edges between different vertices in $R$ are implied by the maximality of sets $X$ and $Y$. 

Not all the presented bounds can hold simultaneously. 
If a vertex in $R$ is connected with $7$ edges to two pairs in $X \cup Y$, then it reduces the number of possible edges between them.
In order to take that into account and obtain better bounds, we use Lemma~\ref{lem:bipartMantel} on an auxiliary graph. 

Let $H$ be the graph on the vertex set $X \cup Y \cup R$ with edges between different pairs in $X$, $Y$ and vertices in $R$ whenever the number of edges between them agrees with the maximum possible number of edges, but without any edges between $X_{ij}$ and $Y_i \cup Y_j$ for $1\leq i < j \leq 3$. 
Unfortunately, graph $H$ may not satisfy the assumption of Lemma~\ref{lem:bipartMantel} with partition $A = X \cup Y$ and $B = R$, but we will show that its thesis still holds. 

\begin{claim}\label{cla:3oriented_bipartMantel}
Graph $H$ satisfies 
\[e(H) \leq \binom{|X|+|Y|}{2} + \binom{|R|}{2} + |X|+|Y|.\]
\end{claim}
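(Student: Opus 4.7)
The plan is to prove the claim by induction on $|X|+|Y|$, mimicking the argument of Lemma~\ref{lem:bipartMantel}. The base case $|X|+|Y|=0$ reduces to $e(H) \leq \binom{|R|}{2}$, which is immediate. For the inductive step, if $H$ has no edge between $A := X\cup Y$ and $B := R$, then $e(H) \leq \binom{|A|}{2}+\binom{|B|}{2}$ and the claim holds. Otherwise it suffices to exhibit an edge $pr \in E(H)$ with $p \in A$ and $r \in B$ that lies in no triangle of $H$: removing the vertices $p$ and $r$ then deletes $\deg_H(p) + \deg_H(r) - 1 \leq |A|+|B|-1$ edges (the inequality being equivalent to $p$ and $r$ having no common neighbour in $H$), and the inductive hypothesis applied to $H - \{p, r\}$ combines with this loss to give exactly $\binom{|A|}{2}+\binom{|B|}{2}+|A|$.

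The key obstacle is thus to produce such a triangle-free edge $pr$. Suppose, for contradiction, that every $A$--$B$ edge of $H$ lies in some triangle. A triangle containing an edge $pr$ with $p \in A$, $r \in B$ has its third vertex either in $A$ (pattern $p\,p_2\,r$) or in $B$ (pattern $p\,r\,r_2$). For any such triangle the definition of $H$ forces each of the three pairwise edge counts in $G'$ to attain the maxima listed in the table preceding the claim. I would then perform a case analysis by the color types of the involved pairs in $X \cup Y$: the deliberate exclusion from $E(H)$ of edges between $X_{ij}$ and $Y_i \cup Y_j$ eliminates many configurations at once, and the surviving ones are restricted to pairs in $X$ sharing the missing color, pairs lying in the same $Y_i$, or matching analogues when the third vertex lies in $R$.

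For each surviving configuration I would combine Claim~\ref{cla:3oriented_bound2edge} with the maximality of the matchings $X$, $Y$, $Z$ to derive either the existence of a rainbow directed triangle in $G$, contradicting our assumption that $G$ is a minimal counterexample, or the contradiction of one of the matchings being maximal (for instance, two vertices outside $X \cup Y$ connected by four edges would violate the maximality of $Y$). The main technical difficulty is expected to occur in the densest subcase, where two pairs $p_1, p_2$ from the same $Y_i$ share a common neighbour $r \in R$: the thirteen edges between $p_1$ and $p_2$ together with the two seven-edge bundles from $r$ create such a dense structure on five vertices that a rainbow directed triangle in $G$ becomes unavoidable, closing the induction and establishing the bound.
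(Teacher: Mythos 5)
Your inductive skeleton (find an $A$--$B$ edge of $H$ lying in no triangle, delete its endpoints, lose at most $|A|+|B|-1$ edges) is arithmetically sound, but the step where you produce such an edge has a genuine gap: you assert that every mixed triangle of $H$ leads to a contradiction, and this is false for $H$ as defined --- the paper explicitly warns that ``graph $H$ may not satisfy the assumption of Lemma~\ref{lem:bipartMantel}.'' Concretely, for a vertex $v\in R$ joined in $H$ to two pairs $(a,b),(c,d)\in X_{12}$, the forced structure is only that $v$ is joined to $b$ and to $d$ by double edges in colours $1,2$ and that all $16$ edges between $\{a,b\}$ and $\{c,d\}$ lie in colours $1,2$; since only two colours occur there, no rainbow directed triangle arises, and no maximality of $X$ is violated because $v$ is the only involved vertex outside $X$. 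The contradiction in this case (and likewise for two pairs in the same $Y_i$) is reached only by invoking a \emph{second} vertex $v'\in R$ adjacent in $H$ to one of the two pairs, and your sketch provides no guarantee that such a $v'$ exists. In particular it can happen that every $A$--$B$ edge of $H$ lies in a triangle, and then your induction has nothing to remove.

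The paper closes exactly this hole with a preprocessing step absent from your proposal: while two pairs in $X\cup Y$ each have at most one $H$-neighbour in $R$, delete both; this costs at most $2(|X|+|Y|)-1$ edges, which matches the decrease $\binom{|X|+|Y|}{2}-\binom{|X|+|Y|-2}{2}+2$ of the claimed bound. After this reduction all but at most one pair in $X\cup Y$ have at least two $H$-neighbours in $R$, and that guaranteed second neighbour is precisely what turns the dense two-pairs-from-$X_{ij}$ and two-pairs-from-$Y_i$ configurations into violations of the maximality of $X$ or $Y$. Only then is $H$ shown to have no triangle meeting both sides of the bipartition, so that Lemma~\ref{lem:bipartMantel} applies directly. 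You would need to add this degree reduction (or an equivalent device) before your case analysis can close.
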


\begin{proof}
Assume there are two pairs in $X \cup Y$ each connected to at most one vertex in $R$. Removing them from $H$ we lose at most $2(|X|+|Y|)-1$ edges. Therefore, if the graph on the remaining vertices satisfies the needed bound on the number of edges, then also $H$ satisfies it. 
This means that we can assume that in $H$ there is at most one pair in $X \cup Y$ having less than two neighbors in $R$.
We will show that with this assumption the graph $H$ does not contain triangles having vertices both from $X \cup Y$ and $R$, thus from Lemma~\ref{lem:bipartMantel} the needed inequality holds.


If a pair $(a,b) \in X$ is connected in $H$ to a vertex $v \in R$, then $a$ or $b$ is connected to $v$ with double edges in two colors -- the same as the colors of double edges between $a$ and $b$. This implies that if $(a,b)$ is connected in $H$ to two vertices in $R$, then $a$ or $b$ has double edges in two colors to both of them as otherwise we have a contradiction with the maximality of $X$.
To avoid appearance of a rainbow directed triangle, those vertices can be connected only in these two colors, and so by at most 2 edges in $G'$. This means they are not connected in $H$. 

If a pair $(a,b) \in Y_1$ is connected in $H$ to a vertex $v \in R$, then one of its vertices, say $b$, is connected to $v$ with $4$ edges and $a$ is connected to $v$ with $3$ edges. 
Moreover, in such a case vertices $b$ and $v$ are connected with a double edge in color $1$, while from $b$ to $\{a,v\}$ there is exactly one edge in color $2$ and one edge in color $3$, as otherwise one of those two colors cannot appear between vertices $a$ and $v$ and it is not possible to have $3$ edges there.  
This implies that if a pair in $Y_1$ is connected in $H$ to two vertices in $R$, then those vertices cannot be connected with edges in colors $2$ and $3$, so they are not connected in $H$. 

Assume now that we have a triangle in $H$ consisting of a vertex in $R$ and two pairs $(a,b), (c,d) \in X_{12}$. 
This implies that the vertex in $R$ is connected with double edges in colors $1$ and $2$ to one vertex in each pair, say $b$ and $d$. 
From Claim~\ref{cla:3oriented_bound2edge} there are 16 edges between $\{a,b\}$ and $\{c,d\}$ only when all of them are in colors $1$ and $2$. 
In particular, $a$ and $c$ are connected with double edges in colors $1$ and $2$.
Now, since $(a,b)$ or $(c,d)$ is connected in $H$ to some other vertex in $R$, we obtain a contradiction with the maximality of $X$.

Consider now that we have a triangle in $H$ consisting of a vertex $v \in R$ and two pairs $(a,b)$, $(c,d)$ in $Y_1$. 
As discussed earlier, we may assume that $b$ and $v$, as well as $d$ and $v$, are connected with a double edge in color $1$, while from $b$ to $\{a,v\}$ and from $d$ to $\{c,v\}$ we have exactly one edge in color $2$ and one in color $3$. 
If vertices $b$ and $c$ are connected with $4$ edges (and so with a double edge in color $1$), from symmetry say with edge $cb$ in color $2$, then it forces edge $cv$ in color $3$. 
As a consequence, edges $bv$ and $dv$ must be in color $2$, which implies that edges $ab$ and $cd$ are also in color $2$. 
This means that there are at most $2$ edges between $a$ and $c$, and between $b$ and $d$, contradicting having $13$ edges between $\{a,b\}$ and $\{c,d\}$. 
We obtain an analogous contradiction if vertices $a$ and $d$ are connected with $4$ edges.
Assume now that vertices $b$ and $d$ are connected with $4$ edges and, from symmetry, that edge $bd$ is in color $2$.
This forces edges $dv$, $vb$, $cd$ and $ba$ in color $2$, and $db$, $vd$, $bv$, $dc$ and $ab$ in color $3$. 
It means that between $a$ and $d$, as well as between $b$ and $c$, we have only $2$ edges, contradicting having $13$ edges between $\{a,b\}$ and $\{c,d\}$. 
Therefore, we must have vertices $a$ and $c$ connected with $4$ edges. 
But then, since $(a,b)$ or $(c,d)$ is connected in $H$ with some other vertex in $R$, we have a contradiction with the maximality of~$Y$.

Assume we have a triangle in $H$ consisting of a vertex $v \in R$, a pair $(a,b) \in Y_1$ and a pair $(c,d) \in Y_2$, with $4$ edges between $b$ and $v$, and between $d$ and $v$. 
Acting exactly the same as in the previous paragraph, assuming that vertices $b$ and $c$ are connected with $4$~edges, we obtain at most $2$ edges between $a$ and $c$. 
But since between $\{a,b\}$ and $\{d,v\}$ we can have at most $12$ edges, we have at most one edge between $b$ and $d$, which contradicts having $12$ edges between $\{a,b\}$ and $\{c,d\}$.
Now, as there cannot be $3$ edges between $b$ and~$d$, we need $4$ edges between $a$ and $c$, which leads to a contradiction as before.

Consider now the possibility of a triangle in $H$ consisting of a vertex $v \in R$, a pair $(a,b) \in X_{12}$ and a pair $(c,d) \in X_{13}$. This implies that $v$ is connected with double edges in appropriate colors to one vertex in each pair, say to $b$ in colors $1$ and $2$, and to $d$ in colors $1$ and $3$. 
It means that there are no edges between $b$ and $d$.
As the bound of $12$ edges between $\{a,b\}$ and $\{c,d\}$ coming from multiple applications of Claim~\ref{cla:3oriented_bound2edge} can be achieved only when between $d$ and $\{a,b\}$ there are $4$ edges in colors $1$ and $3$, as well as $4$ edges in colors $2$ and $3$, it implies that $a$ and $d$ are connected with double edges in all colors contradicting Claim~\ref{cla:perfect_matching}.

As we do not add edges in $H$ between $X_{ij}$ and $Y_i \cup Y_j$, the final case to consider is a triangle consisting of a vertex $v \in R$, a pair $(a,b) \in X_{12}$ and a pair $(c,d) \in Y_3$. 
Similarly as in the previous cases, we can assume that $v$ is connected to $b$ with double edges in colors $1$ and $2$, and to $d$ with a double edge in color $3$. 
This means that between $b$ and $d$ there are no edges in colors $1$ and $2$. 
Similarly as in the previous paragraph, in order to have $12$ edges between $\{a,b\}$ and $\{c,d\}$ we need to have $4$ edge in colors $1$ and $2$ between $b$ and $\{c,d\}$, so $b$ and $c$ are connected with double edges in these colors. 
This implies that there are no edges in color $3$ between $c$ and $v$ contradicting that $(c,d)$ and $v$ are connected in $H$.
\end{proof}

Our goal is to show the following inequality
\begin{align}\label{eq:3oriented_exact3bound}
e(G') \leq 3\binom{n}{2} + \frac{1}{2}\sum_{1\leq i<j\leq 3}\!\!\left(x_{ij}+\frac{3}{4}y_{ij}+z_{ij}\right)^2\!\!n^2 - \frac{1}{4}\sum_{1\leq i<j\leq 3}\!\! y_{ij}z_{ij}n^2-\frac{1}{2}\sum_{1\leq i<j\leq 3}\!\!z_{ij}^2n^2+n.    
\end{align}

This inequality translates to the following bounds on the total number of edges in $G'$ between the respective sets for each $i, j \in [3]$, $i \not= j$:
\begin{itemize}[itemsep=0pt, topsep=4pt]
\item $16$ edges between different pairs of vertices in $X_{ij}$;
\item $13\frac{1}{2}$ edges between a pair of vertices in $X_{ij}$ and a pair of vertices in $Y_{i} \cup Y_{j}$;
\item $14$ edges between a pair of vertices in $X_{ij}$ and a pair of vertices in $Z_{i} \cup Z_{j}$;
\item $13\frac{1}{8}$ edges between different pairs of vertices in $Y_{i}$;
\item $12\frac{9}{16}$ edges between a pair of vertices in $Y_{i}$ and a pair of vertices in $Y_{j}$;
\item $13$ edges between a pair of vertices in $Y_{i}$ and a pair of vertices in $Z_{i}$;
\item $12\frac{1}{2}$ edges between a pair of vertices in $Y_{i}$ and a pair of vertices in $Z_{j}$;
\item $12$ edges between any other pair of distinct vertices in $X \cup Y \cup Z$;
\item $6$ edges between a pair of vertices in $X \cup Y \cup Z$ and a vertex in $R$;
\item $3$ edges between any different vertices in $R$.
\end{itemize}

Observe that the above bounds between different pairs in $X \cup Y \cup Z$ and vertices in $R$ (rounded down to the nearest integer) differ from the maximum bounds presented earlier in the following two places. Between $X \cup Y$ and $R$ we need at most $6$ edges not $7$ edges, while between $X_{ij}$ and $Y_i \cup Y_j$ we need at most $13$ edges and we have a better bound of $12$ edges. The latter pairs are exactly those pairs that we do not connect in $H$. 
Therefore, denoting by $c$ the right-hand side of $(\ref{eq:3oriented_exact3bound})$ with subtracted $\frac{1}{2}n$, we obtain that
\[e(G') \leq c + e(H) - \binom{|X|+ |Y|}{2}-\binom{|R|}{2},\]
as then all the bounds between different pairs in $X \cup Y \cup Z$ and vertices in $R$ are satisfied, while the additional linear term $\frac{1}{2}n$ in $c$ gives the remaining bound on the number of edges between the two vertices in each pair in $X \cup Y \cup Z$. 
From Claim~\ref{cla:3oriented_bipartMantel} we get that 
\[e(G') \leq c + |X|+|Y| \leq c + \frac{1}{2}n,\]
which is exactly the needed inequality $(\ref{eq:3oriented_exact3bound})$.

Since 
\[e(G') > e(G) - \frac{1}{2}n \geq \frac{5}{3}n^2 - \frac{1}{2}n,\] together with (\ref{eq:3oriented_exact3bound}), it implies that
\begin{align}\label{eq:3oriented_3bound}
\sum_{1\leq i<j\leq 3}\left(x_{ij}+\frac{3}{4}y_{ij}+z_{ij}\right)^2 - \frac{1}{2}\sum_{1\leq i<j\leq 3} y_{ij}z_{ij}-\sum_{1\leq i<j\leq 3}z_{ij}^2 > \frac{1}{3}.    
\end{align}

%

Assume, without loss of generality, that the minimal value of $x_{ij}+\frac{3}{4}y_{ij}+z_{ij}$ is attained for $i=1$ and $j=2$. 
Let $u = x_{12}+\frac{3}{4}y_{12}+z_{12}$, $y = y_{12}$ and $z = z_{12}$. 
Since
\[\sum_{1\leq i<j\leq 3}x_{ij}+\frac{3}{4}y_{ij}+z_{ij} = 1 - r - \frac{1}{2}\left(|Y_1|+|Y_2|+|Y_3|\right)/n \leq 1-r-\frac{1}{2}y\]
and the maximum value of a sum of squares of non-negative numbers with a given sum is obtained when the largest number is as large as possible, then
\[\sum_{1\leq i<j\leq 3}\left(x_{ij}+\frac{3}{4}y_{ij}+z_{ij}\right)^2 \leq 2u^2 + \left(1-r-\frac{1}{2}y-2u\right)^2.\]
Note also that 
$y_{12}z_{12} + y_{13}z_{13} + y_{23}z_{23} \geq yz$
and
\begin{align*}
z_{12}^2 + z_{13}^2 + z_{23}^2 &\geq z^2 + \frac{1}{2}\left(z_{12} + z_{13} + z_{23} -z\right)^2 \\
&= \frac{3}{2}z^2 + (z_{12} + z_{13} + z_{23})\left(\frac{1}{2}(z_{12} + z_{13} + z_{23})-z\right)\\
&= \frac{3}{2}z^2 + (z_{12} + z_{13} + z_{23})|Z_3|/n \\
&\geq \frac{3}{2}z^2.
\end{align*}

Altogether, using the above observations, from (\ref{eq:3oriented_2bound}) and (\ref{eq:3oriented_3bound}) we obtain that the following constraints are satisfied:
\begin{align*}
& \!\left(u+\frac{7}{12}y + \frac{1}{2}z+\frac{3}{4}r\right)^2 - \left(\frac{1}{2}z + \frac{3}{4}r\right)^2 > \frac{1}{9},\\
& 2u^2 + \left(1-r-\frac{1}{2}y-2u\right)^2 -\frac{1}{2}yz - \frac{3}{2}z^2 > \frac{1}{3},\\
& 3u+\frac{1}{2}y+r \leq 1,\\
& u -\frac{3}{4}y-z \geq 0,\\
& u, y, z, r \geq 0.
\end{align*}

Assuming that the above constraints are satisfied with non-strict inequalities one can check using any computer program for symbolic computations or Lagrange multipliers that the only possibility for $y$, $z$ and $r$ is to have $y = z = r = 0$, which implies $u=\frac{1}{3}$.
But this gives a contradiction with the above strict inequalities. 
\end{proof}

\section{Rainbow transitive triangles and at least 4 colors}\label{sec:transitive4+}

In this section we consider $c \geq4$ directed graphs $G_1, G_2, \ldots, G_c$ on a common vertex set and forbid rainbow transitive triangles. 
The following theorem provides the optimal lower bound for the number of edges in all  colors that guarantees existence of a rainbow transitive triangle and implies the part of Theorem~\ref{cor:4+} for a rainbow transitive triangle. 

\begin{theorem} \label{thm:transitive4+}
Let $G_1, G_2, \ldots, G_c$ be $c \geq 4$ directed graphs on a common set of $n$ vertices. 
If $\sum_{i = 1}^c e(G_i) > \frac{c}{2}n^2$, then there exists a rainbow transitive triangle.
\end{theorem}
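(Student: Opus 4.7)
The plan is to mirror the proof of Theorem~\ref{thm:directed4+}, replacing each argument about directed triangles with its transitive-triangle analog. Assume $G = (G_1, \ldots, G_c)$ is a minimal counterexample on $n \geq 3$ vertices. First, I would prove the analog of Claim~\ref{cla:many_col_double_edges}: no pair of vertices $u, v$ is connected by double edges in two distinct colors, say $1$ and $3$. For any third vertex $x$ and any pair of consecutive colors $(k, k+1)$ (for $c \geq 4$ one automatically has $\{k, k+1\} \neq \{1, 3\}$), examine the eight potential directed edges between $x$ and $\{u, v\}$ in those colors. Enumerating the six orderings of a transitive triangle on $\{x, u, v\}$ reveals a bipartite pattern of forbidden pairs: any edge incident with $u$ in color $k$ combined with any edge incident with $v$ in color $k+1$ produces a rainbow transitive triangle (via one of the double edges $uv$ or $vu$ in color $1$ or $3$), and symmetrically with the roles of $k$ and $k+1$ swapped. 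The forbidden-pairs graph is the vertex-disjoint union of two copies of $K_{2,2}$, whose maximum independent set has size $4$. Hence $e_k(x, \{u, v\}) + e_{k+1}(x, \{u, v\}) \leq 4$, and summing over all $x$ and $k$ yields $e(G) \leq \frac{c}{2} n^2$, a contradiction.

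Next, define the auxiliary directed graph $H$ exactly as in Section~\ref{sec:directed4+}: $uv \in E(H)$ iff $e(u, v) = c+1$ and at least $3$ edges go from $u$ to $v$. The transitive analog of Claim~\ref{cla:many_col_c+1_edge} turns out to be even stronger than its directed counterpart: if $uv, uw \in E(H)$ or $vu, wu \in E(H)$, then $e(v, w) = 0$. Indeed, if $uv, uw \in E(H)$ then the color sets $A = \{i : uv \in E(G_i)\}$ and $B = \{i : uw \in E(G_i)\}$ each have size at least $3$, so any edge $vw$ in color $\beta$ admits a choice $\alpha \in A \setminus \{\beta\}$ and $\gamma \in B \setminus \{\alpha, \beta\}$, giving a rainbow transitive triangle with source $u$, middle $v$, sink $w$; edges $wv$ are excluded by the symmetric argument.

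With these in hand, the analogs of Claim~\ref{cla:many_col_cycle} and Claim~\ref{cla:many_col_path} (that $H$ contains no directed cycle and indeed no edges at all) go through by the same counting arguments, since the strengthened Claim~\ref{cla:many_col_c+1_edge} analog only tightens the edge bounds. The one genuinely new step is ruling out a hypothetical $3$-cycle $v_1 \to v_2 \to v_3 \to v_1$ in $H$: in the directed case the cycle itself is already a rainbow directed triangle, but for transitive triangles one instead observes that each reverse direction carries at least one color (since $|A_{i,i+1}| \leq c$ and $|A_{i,i+1}| + |A_{i+1,i}| = c+1$ for every $H$-edge force $|A_{i+1,i}| \geq 1$), so one may pick $\alpha \in A_{21}$, then $\beta \in A_{23} \setminus \{\alpha\}$, then $\gamma \in A_{31} \setminus \{\alpha, \beta\}$ to form a rainbow transitive triangle on $v_1, v_2, v_3$ with source $v_2$, middle $v_3$, and sink $v_1$. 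Once $H$ is empty, every pair of vertices has at most $c$ edges, yielding $e(G) \leq c\binom{n}{2} < \frac{c}{2} n^2$ and the final contradiction.

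The main obstacle will be the transitive analog of Claim~\ref{cla:many_col_double_edges}: one must carefully enumerate the six transitive-triangle orderings on $\{x, u, v\}$ and verify that every dangerous combination of colors $k, k+1$ with colors $1, 3$ collapses to the clean bipartite picture of two disjoint $K_{2,2}$'s on the edges-at-$u$ versus edges-at-$v$ sides. The subsequent claims mostly recycle the directed-case arguments, appealing to the stronger $e(v, w) = 0$ wherever the directed proof used $e(v, w) \leq 2$.
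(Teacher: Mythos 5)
Your proof is correct, but it takes a genuinely different and considerably heavier route than the paper. You port the full machinery of Theorem~\ref{thm:directed4+}: the consecutive-colour pairing argument for the analogue of Claim~\ref{cla:many_col_double_edges}, the auxiliary digraph $H$ of pairs carrying $c+1$ edges, and the ``no cycles, then no edges'' analysis. Your adaptations are sound -- the two-disjoint-$K_{2,2}$ analysis correctly yields $e_k(x,\{u,v\})+e_{k+1}(x,\{u,v\})\leq 4$ (any single edge between $x$ and $u$ and any single edge between $x$ and $v$ in a different colour close up into a transitive triangle, since both orientations of $uv$ are available in colours $1$ and $3$), the strengthening of Claim~\ref{cla:many_col_c+1_edge} to $e(v,w)=0$ is valid, and you correctly identify and repair the one step that does not transfer verbatim, namely that a $3$-cycle in $H$ is not itself a rainbow transitive triangle but forces one via the reverse colour classes. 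The paper instead exploits a feature specific to transitive triangles to avoid $H$ entirely: if $u,v$ carry double edges in two colours, then \emph{any} edge between $x$ and $u$ forbids all but at most $4$ of the $2c$ potential edges between $x$ and $v$ (one does not need to pair up colours, because every two-edge path through $\{u,v\}$, in every orientation, completes to a transitive triangle using a suitably directed $uv$-edge in a third colour); this gives $e(x,\{u,v\})\leq 8\leq 2c$ whenever $x$ sees both endpoints, killing two-colour double edges in one stroke (Claim~\ref{cla:two_double_edges}), and a second short claim (Claim~\ref{cla:one_double_edge}) then eliminates single-colour double edges as well, so every pair carries at most $c$ edges and the count closes immediately. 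In short: your argument works and is a faithful generalisation of the directed case, but the transitive condition is strong enough that the detour through $H$, its acyclicity, and the longest-path analysis is unnecessary; the paper's two-claim proof is what that extra strength buys.
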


\begin{proof}[Proof of Theorem~\ref{thm:transitive4+}]
By contradiction, assume that $G=(G_1, G_2, \ldots, G_c)$ is a sequence of directed graphs on a common set $V(G)$ that forms a counterexample with the smallest number of vertices.
In particular, $e(G) > \frac{c}{2}n^2$, there are no rainbow transitive triangles and $n \geq 3$.
The proof is obtained by showing two claims on the structure of $G$ leading to a contradiction with the number of edges. 
Firstly, we prove that in $G$ no two vertices are connected by double edges in two colors. 
Secondly, we prove that $G$ does not contain any double edge at all. 

\begin{claim}\label{cla:two_double_edges}
For any $i, j \in [c]$, $i \neq j$ there are no vertices $u$ and $v$ such that $e_i(u, v) = 2$ and $e_j(u, v) = 2$.
\end{claim}

\begin{proof}
For the sake of contradiction assume that vertices $u$ and $v$ satisfy $e_i(u, v) = 2$ and $e_j(u, v) = 2$ for some $i, j \in [c]$, $i \neq j$. 
Notice that, in order to avoid having a rainbow transitive triangle, if there is an edge between some vertex $x \in V'=V(G) \setminus \{u,v\}$ and~$u$, then between $x$ and $v$ we can have only at most $4$ edges.
Therefore, if $e(x,u)\geq1$ and $e(x,v)\geq1$, then $e(x,\{u,v\}) \leq 8 \leq 2c$. Otherwise there are no edges between $x$ and $u$ or $v$, so $e(x,\{u,v\}) \leq 2c$. 
It follows that 
\[e(G) = e(u, v) + e(\{u, v\},V') + e(V') \leq 2c + 2c(n-2) + \frac{c}{2}(n-2)^2 = \frac{c}{2}n^2,\] 
which is a contradiction.
\end{proof}

\begin{claim}\label{cla:one_double_edge}
For any $i \in [c]$ there are no vertices $u$ and $v$ such that $e_i(u, v) = 2$.
\end{claim}

\begin{proof}
Suppose for a contradiction that there exist $i \in [c]$ and vertices $u$ and $v$ satisfying $e_i(u,v)=2$.
For any vertex $x \in V'=V(G) \setminus \{u,v\}$, if $e_j(x,u)\geq1$ and $e_j(x,v)\geq1$ for some $j \in [c] \setminus \{i\}$, then between $x$ and $\{u,v\}$ we have only edges in colors $i$ and $j$, so $e(x,\{u,v\}) \leq 6$ from Claim~\ref{cla:two_double_edges}. 
Otherwise, $e(x,\{u,v\}) \leq c+3$, because from Claim~\ref{cla:two_double_edges} we have only at most two double edges between $x$ and $\{u,v\}$, and by removing one edge in each of them we have at most 1 edge in each $G_j$ for $j \in [c]\setminus\{i\}$ and 2 edges in $G_i$.
It follows that
\[e(G) = e(u, v) + e(\{u, v\},V') + e(V') \leq c+1 + (c+3)(n-2) + \frac{c}{2}(n-2)^2 < \frac{c}{2}n^2,\] 
which is a contradiction.    
\end{proof}

The above claim implies that between any two vertices in $G$ there are at most $c$ edges, so 
\[e(G) \leq c\binom{n}{2} < \frac{c}{2}n^2\]
contradicting the definition of $G$.
\end{proof}

\section{Rainbow transitive triangles and 3 colors}\label{sec:transitive3}

Similarly as in the case of the forbidden rainbow directed triangle, in case of 3 colors Theorem~\ref{thm:transitive4+} does not hold. In this case we prove the following theorem implying Theorem~\ref{cor:transitive3}. 

\begin{theorem}\label{thm:transitive3}
Let $G_1, G_2, G_3$ be three directed graphs on a common set of $n$ vertices. If for every $1 \leq i < j \leq 3$ it holds $e(G_i)+e(G_j)>\left(\frac{104-8\sqrt{7}}{81}\right)n^2+3n$, then there exists a rainbow transitive triangle.
\end{theorem}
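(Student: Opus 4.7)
The plan is to follow the strategy of Theorem \ref{thm:directed3}, passing to a minimal counterexample $G=(G_1,G_2,G_3)$ and extracting enough local structural information to force $G$ into the asymptotic shape of Figure \ref{fig:transitive3}. First I would establish transitive analogues of the preliminary claims used there: namely, a bound of at most $5$ edges between any two vertices (the analogue of Claim \ref{cla:perfect_matching}), obtained by noting that if $e(x,y)=6$ then every third vertex $w$ has $e(w,\{x,y\})$ bounded in each two-color sum, so deleting $\{x,y\}$ preserves the hypothesis and contradicts minimality; and a bound on $e_i(w,\{u,v\})+e_j(w,\{u,v\})$ when an edge $uv$ in the remaining color $k$ is present (the analogue of Claim \ref{cla:3oriented_bound2edge}), obtained by observing that any rainbow transitive triangle on $\{u,v,w\}$ through the fixed $uv$-edge forbids specific pairs of $i$- and $j$-colored edges among $uw, wu, vw, wv$. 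The bound should be strictly stronger when $e_k(u,v)=2$.

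Next I would decompose $V(G)$ by a chain of maximal matchings in the spirit of the sets $X_{ij}$, $Y_i$, $Z_i$, $R$ of the directed-triangle proof. The extremal construction in Figure \ref{fig:transitive3} indicates that the \emph{heavy} pairs, corresponding to the interiors of the three sets, are precisely pairs of vertices joined by double edges in two colors, so $X_{ij}$ is selected greedily, and successive matchings $Y$ and $Z$ cover pairs joined by $4$ edges and by a double edge plus a single edge respectively, with $R$ the remainder. After a slight edge-pruning step passing to $G'$ with $e(G')>e(G)-O(n)$, I would derive two families of inequalities: a per-pair-of-colors bound $e_i(G')+e_j(G')\le Q_{ij}(x,y,z,r)\,n^2+O(n)$ analogous to (\ref{eq:3oriented_exact2bound}), and a total-edge bound $e(G')\le Q(x,y,z,r)\,n^2+O(n)$ obtained via a bipartite Mantel-type argument that uses Lemma \ref{lem:bipartMantel} in the role played by Claim \ref{cla:3oriented_bipartMantel}. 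Combining these with the hypothesis and rewriting in normalized part sizes yields a constrained optimization whose unique maximizer, identified via Lagrange multipliers or direct substitution, is the triple $\left(\tfrac{4-\sqrt{7}}{9},\tfrac{4-\sqrt{7}}{9},\tfrac{1+2\sqrt{7}}{9}\right)$ matching Figure \ref{fig:transitive3}; this gives the critical value $\tfrac{52-4\sqrt{7}}{81}$ per color, and the strict inequality in the hypothesis yields the desired contradiction.

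The main obstacle will be the derivation of the sharp edge-count bounds between pairs of different types in the decomposition. Because transitive triangles are asymmetric in their vertex roles (source, middle, sink) -- in contrast to the cyclic symmetry of directed triangles -- every bullet-point bound in the analogue of the table following (\ref{eq:3oriented_exact2bound}) demands a separate, direction-sensitive case analysis, and the maximality arguments for the matchings must be refined so that the resulting coefficients assemble precisely into the quadratic form whose constrained maximum recovers the ADMMS constant $\tfrac{26-2\sqrt{7}}{81}$ (doubled in the directed setting). The bipartite Mantel refinement is likely the longest and most delicate step, since the list of forbidden triangles in the auxiliary graph $H$ now depends on the direction of the between-pair edges and typically requires a case split on which vertex plays the role of source of the implied rainbow transitive triangle.
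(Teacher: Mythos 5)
Your proposal is a program rather than a proof: every quantitatively decisive step is deferred. You do not state, let alone verify, the analogues of the per-pair edge bounds, the table of bounds between the parts of your decomposition, or the constrained optimization that is supposed to output the constant $\frac{52-4\sqrt{7}}{81}$; you explicitly flag these as ``the main obstacle.'' But these steps \emph{are} the theorem. Carrying them out along the route you sketch would amount to re-deriving the Aharoni--DeVos--Gonz\'alez--Montejano--\v S\'amal bound from scratch in the directed setting, which is a substantial piece of work (the undirected case alone is a full paper), and there is no evidence in your outline that your particular choice of matchings $X,Y,Z,R$ and pruning produces a quadratic form whose maximum is exactly the required constant. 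So as written there is a genuine gap: the argument cannot be checked or completed from what you have given.

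The paper takes a much shorter and genuinely different route that you should compare against. It picks a counterexample that is minimal in the number of vertices and, among those, \emph{maximal in the number of edges}, and uses this extremality to symmetrize: if some color has single edges, one deletes all single edges of the color with the fewest of them and doubles the single edges of the other two colors; this preserves all pairwise edge-count hypotheses, strictly increases the total edge count, and creates no new rainbow transitive triangle (a double edge in one color together with one edge in each of the other two colors on a triple already yields a rainbow transitive triangle regardless of orientations). Hence in the extremal counterexample every edge is a double edge, the directed graphs collapse to undirected graphs $H_i$ with $e(H_i)=\frac{1}{2}e(G_i)$, and Theorem~\ref{thm:undirected3} applies directly to produce a rainbow triangle, hence a rainbow transitive triangle. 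The lesson is that for the transitive triangle the problem reduces cleanly to the known undirected result, so the heavy $X,Y,Z,R$ machinery of Theorem~\ref{thm:directed3} (which is needed there precisely because the directed-triangle extremal example is \emph{not} a doubled undirected graph) is the wrong tool here.
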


In the proof we make use of the analogous theorem in the undirected setting. 

\begin{theorem}[Aharoni et al. \cite{ADMMS20}]\label{thm:undirected3}
Let $H_1, H_2, H_3$ be graphs on a common set of $n$ vertices. If for every $1 \leq i < j \leq 3$ it holds $e(H_i)+e(H_j)>\left(\frac{52-4\sqrt{7}}{81}\right)n^2+\frac{3}{2}n$, then there exists a rainbow triangle.
\end{theorem}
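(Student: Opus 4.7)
The plan is to reduce Theorem~\ref{thm:transitive3} to the undirected Theorem~\ref{thm:undirected3} by means of a linear-ordering trick. Given a linear ordering $\pi$ of $V(G)$, for each color $i \in [3]$ I define the undirected graph $H_i^\pi$ on $V(G)$ with edge set $\{\{u,v\} : \pi(u) < \pi(v) \text{ and } uv \in E(G_i)\}$; that is, $H_i^\pi$ records those unordered pairs carrying a ``forward'' directed edge of color $i$ with respect to $\pi$. The crucial feature of this definition is that every rainbow triangle in $(H_1^\pi, H_2^\pi, H_3^\pi)$ automatically lifts to a rainbow \emph{transitive} triangle in $G$: if $\{u, v, w\}$ with $\pi(u) < \pi(v) < \pi(w)$ forms a rainbow triangle in $H^\pi$ with edges colored (say) $\{u,v\} \mapsto 1$, $\{v,w\} \mapsto 2$, $\{u,w\} \mapsto 3$, then $uv \in E(G_1)$, $vw \in E(G_2)$, and $uw \in E(G_3)$ together form a transitive triangle with source $u$ and sink $w$ in three distinct colors, as required.

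To apply Theorem~\ref{thm:undirected3} I need an ordering $\pi$ for which $e(H_i^\pi) + e(H_j^\pi) > B := \left(\frac{52 - 4\sqrt{7}}{81}\right)n^2 + \frac{3}{2} n$ holds for every $1 \leq i < j \leq 3$. A direct count gives $e(H_i^\pi) = D_i + F_i(\pi)$, where $D_i$ denotes the number of double-edge pairs in $G_i$ and $F_i(\pi)$ the number of single edges of $G_i$ whose unique orientation is forward in $\pi$. Taking $\pi$ uniformly at random, $\mathbb{E}[F_i(\pi)] = S_i/2$ (with $S_i$ the number of single-edge pairs), hence $\mathbb{E}[e(H_i^\pi)] = D_i + S_i/2 = e(G_i)/2$, and therefore
\[\mathbb{E}\bigl[e(H_i^\pi) + e(H_j^\pi)\bigr] = \frac{e(G_i) + e(G_j)}{2} > B\]
by the hypothesis of Theorem~\ref{thm:transitive3}, which is exactly twice the undirected bound (including the linear slack: $3n = 2 \cdot \frac{3}{2}n$).

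The main obstacle is upgrading this ``in expectation'' statement into the simultaneous bound $e(H_i^\pi) + e(H_j^\pi) > B$ for all three pairs under a single ordering $\pi$. The identity $e(H_i^\pi) + e(H_i^{\pi^R}) = e(G_i)$, with $\pi^R$ the reverse ordering, immediately shows that for each fixed pair $(i,j)$ at least one of $\pi, \pi^R$ meets the desired inequality, but ensuring all three pairs simultaneously is genuinely subtler because the minimum of the three pair-sums is not itself an additive quantity. I expect to close this gap either by selecting $\pi$ to maximize $\min_{i < j}\bigl(e(H_i^\pi) + e(H_j^\pi)\bigr)$ and running an exchange argument on adjacent transpositions of $\pi$ (showing that a local optimum must already satisfy all three bounds), or, failing that, treating the pathological configurations where every ordering fails on some pair via a minimum-counterexample structural analysis in the spirit of Sections~\ref{sec:directed4+}--\ref{sec:transitive4+}; the $3n$ linear slack engineered into the hypothesis appears calibrated precisely to leave enough room for such an argument. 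Once a suitable ordering $\pi$ is produced, Theorem~\ref{thm:undirected3} yields a rainbow triangle in $H^\pi$, which the lifting property converts into the desired rainbow transitive triangle in $G$.
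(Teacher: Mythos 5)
Your proposal does not address the statement at hand. The statement is Theorem~\ref{thm:undirected3}, the purely undirected result of Aharoni, DeVos, Gonz\'alez, Montejano and \v S\'amal: three \emph{undirected} graphs $H_1,H_2,H_3$ on a common $n$-vertex set with $e(H_i)+e(H_j)>\left(\frac{52-4\sqrt{7}}{81}\right)n^2+\frac{3}{2}n$ for every pair must contain a rainbow triangle. In the paper this is an imported theorem, quoted from \cite{ADMMS20} and used as a black box inside the proof of Theorem~\ref{thm:transitive3}; proving it would mean reproducing (or replacing) the extremal argument of that paper. Your write-up instead starts from three \emph{directed} graphs, invokes Theorem~\ref{thm:undirected3} as known, and tries to deduce Theorem~\ref{thm:transitive3} from it. Relative to the assigned statement this is circular: the result to be proved is precisely the ingredient you assume, and nothing in your text bears on why the undirected bound forces a rainbow triangle.

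Even read as a proof of Theorem~\ref{thm:transitive3}, the argument is incomplete, as you yourself flag: a uniformly random ordering gives $\mathbb{E}\bigl[e(H_i^\pi)+e(H_j^\pi)\bigr]=\frac{1}{2}\bigl(e(G_i)+e(G_j)\bigr)$, which only barely exceeds the undirected threshold for each pair \emph{separately}, and producing one ordering meeting all three pairwise bounds simultaneously is exactly the missing step; neither the adjacent-transposition exchange idea nor the ``minimum counterexample'' fallback is carried out. For comparison, the paper's route to Theorem~\ref{thm:transitive3} avoids orderings altogether: it takes a counterexample with the maximum number of edges, shows every edge must then belong to a double edge of its color (symmetrize by deleting the least numerous color class of single edges and doubling the other two, which cannot create a rainbow transitive triangle and does not decrease any pair-sum, contradicting maximality), and then passes to the underlying undirected graphs $H_i$ with $e(H_i)\geq\frac{1}{2}e(G_i)$, where Theorem~\ref{thm:undirected3} applies directly. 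If your goal is the undirected statement itself, none of this machinery is relevant and you would need to engage with the proof in \cite{ADMMS20}.
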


Since here we forbid only rainbow transitive triangles and allow rainbow directed triangles, Theorem~\ref{thm:transitive3} does not follow straightforwardly from Theorem~\ref{thm:undirected3}.
Nevertheless, we prove that a hypothetical counterexample with the maximum number of edges does not contain rainbow directed triangles and apply Theorem~\ref{thm:undirected3}.

\begin{proof}[Proof of Theorem~\ref{thm:transitive3}]
By contradiction, assume that $G=(G_1, G_2, G_3)$ is a sequence of directed graphs on a common set $V(G)$ that forms a counterexample with the smallest number of vertices, and among such, with the largest number of edges.

\begin{claim}
For every $i \in [3]$ and $u,v \in V(G)$ if $uv \in E(G_i)$ then $vu \in E(G_i)$. 
\end{claim}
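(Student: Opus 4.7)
The plan is to argue by contradiction using the extremal choice of $G$. Assume that some edge $uv$ lies in $G_i$ while its reverse $vu$ does not. Adding $vu$ to $E(G_i)$ can only increase edge counts, so the edge-count hypothesis is still satisfied, and hence the only way the enlarged graph can fail to be a (larger) counterexample is by containing a rainbow transitive triangle. That triangle is forced to involve the newly added edge $vu$ in color $i$. I would fix such a triangle on some vertex set $\{u,v,w\}$, whose remaining two edges live in $G$ and use the other two colors $j$ and $k$.

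Next, I would enumerate the three possible positions of $vu$ inside a transitive triangle on $\{u,v,w\}$: source-to-middle, middle-to-sink, or long source-to-sink. In the first case (source $v$, middle $u$, sink $w$) the pre-existing edges on $\{u,v,w\}$ are $vw$ and $uw$; together with $uv\in E(G_i)$ they form a transitive triangle $u\to v\to w$, $u\to w$ in $G$, and since the three colors $i, j, k$ are pairwise distinct this is already a rainbow transitive triangle in $G$, contradicting the assumption that $G$ is a counterexample. The second case (source $w$, middle $v$, sink $u$) is symmetric: the pre-existing $wv, wu$ together with $uv$ yield the rainbow transitive triangle $w\to u\to v$, $w\to v$ in $G$, again a contradiction.

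The remaining case will be the main obstacle. There $v$ is the source, $w$ the middle, and $u$ the sink, so the pre-existing edges on $\{u,v,w\}$ are $vw$ and $wu$; together with $uv$ they form the rainbow \emph{directed} triangle $u\to v\to w\to u$, which the statement does not forbid. To dispose of this case I would apply the same max-edge swap to other missing reverse edges of this cycle---for instance, to $uw$ in color $k$ and to $wv$ in color $j$, each of which, if absent, must likewise complete a rainbow transitive triangle through some vertex. Combined with the constraint that none of these forced completions can actually be a rainbow transitive triangle in $G$, a short finite case analysis on the colorings of the six directed edges on $\{u,v,w\}$ (and their interactions with common neighbors) should force a contradiction and finish the proof.
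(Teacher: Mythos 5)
Your first two cases are handled correctly, but the third case --- which you rightly flag as the main obstacle --- is where the argument breaks, and the fix you sketch cannot work. In that case $G$ contains a rainbow directed triangle $u\to v\to w\to u$ with $uv\in E(G_i)$, $vw\in E(G_j)$, $wu\in E(G_k)$, and you propose to add the reverses $uw$ in color $k$ and $wv$ in color $j$ and chase the rainbow transitive triangles that maximality forces. But each of those additions is blocked by the very same three vertices: adding $uw$ in color $k$ completes the rainbow transitive triangle $u\to v\to w$, $u\to w$ (colors $i,j,k$), and adding $wv$ in color $j$ completes $w\to u\to v$, $w\to v$ (colors $k,i,j$). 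So maximality is satisfied trivially and yields no new information. A bare rainbow directed triangle, with no further edges, is a rainbow-transitive-triangle-free configuration in which every one of these reverse-edge additions is already obstructed; since your case-3 argument never invokes the edge-count hypothesis, no finite case analysis on $\{u,v,w\}$ and their common neighbours can extract a contradiction from it. The one-edge-at-a-time strategy is therefore stuck.

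The paper escapes this by a single global modification instead of local additions. Letting $s_i$ be the number of pairs joined by a single edge in color $i$, with $s_1\le s_2\le s_3$, it \emph{deletes} all single edges of color $1$ and \emph{doubles} all single edges of colors $2$ and $3$. This keeps every $e(G_i)+e(G_j)$ at least as large, strictly increases the total number of edges, and creates no rainbow transitive triangle: in the modified graph every edge is double and no color-$1$ edge was added, so a rainbow transitive triangle there would force, back in $G$, a double edge in color $1$ on one pair together with at least one edge of color $2$ and one of color $3$ on the other two pairs --- and whatever the orientations of those two single edges, the color-$1$ double edge lets one complete a rainbow transitive triangle already in $G$. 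The deletion of the color-$1$ single edges is precisely what neutralizes the rainbow-directed-triangle obstruction that halts your local argument; you would need some comparable global step to repair the proof.
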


\begin{proof}
Assume by contradiction that the claim does not hold and for $i \in [3]$ let $s_i$ be the number of unordered pairs $u, v \in V(G)$ connected with a single edge in color $i$. 
Without loss of generality assume that $s_1 \leq s_2 \leq s_3$. 
From $G$ we create a sequence $G'=(G'_1, G'_2, G'_3)$ of directed graphs in the following way. 
For each $i\in[3]$ each pair of vertices connected with a double edge in $G_i$ is also connected with a double edge in $G'_i$. 
All $s_1$ single edges in color $1$ are removed, while all the remaining $s_2 + s_3$ pairs of vertices connected with a single edge in colors $2$ or $3$ become connected with a double edge in $G'_2$ or $G'_3$, respectively. 
Note that for every $1 \leq i < j \leq 3$ it holds $e(G'_i)+e(G'_j) \geq e(G_i)+e(G_j)$ and also $e(G') > e(G)$. 

It remains to show that $G'$ does not contain a rainbow transitive triangle as then it forms a counterexample to Theorem~\ref{thm:transitive3} with a larger number of edges than $G$ contradicting its maximality. 
Thus, assume that vertices $u,v,w$ form a rainbow transitive triangle in $G'$.
As all pairs of vertices in $G'$ may only be connected with a double edge and no edge in color $1$ was added while creating $G'_1$, vertices $u,v,w$ are forming a triangle in $G$ with a double edge in color $1$ and at least one edge in colors $2$ and $3$. 
Regardless of the orientation of those two edges, we have a forbidden rainbow transitive triangle in $G$. 
\end{proof}

This claim shows that for every $i \in [3]$ between every two vertices in $G$ we have either 0 or 2 edges in color $i$. For each $i \in [3]$ let $H_i$ be the graph on $V(G)$ obtained by joining those pairs of vertices which are connected with a double edge in $G_i$. 
Note that for every $1 \leq i < j \leq 3$ it holds \[e(H_i)+e(H_j) \geq \frac{1}{2}e(G_i)+\frac{1}{2}e(G_j) > \left(\frac{52-4\sqrt{7}}{81}\right)n^2+\frac{3}{2}n.\]
Thus, from Theorem~\ref{thm:undirected3} there is a rainbow triangle in H. 
This implies that in $G$ we have a rainbow triangle formed by double edges, which contains a rainbow transitive triangle contradicting the assumption on $G$.
\end{proof}

\section{Oriented graphs without rainbow triangles}\label{sec:oriented}

In this section we consider the setting when $G_1, G_2, \ldots, G_c$ are  oriented graphs on a common set of $n$ vertices. 
When a rainbow directed triangle is forbidden, in contrary to the setting of directed graphs presented in Sections \ref{sec:directed4+} and \ref{sec:directed3}, in this setting the problem becomes trivial.
It is so, because if each $G_i$ for $i \in [c]$ is the same transitive tournament, then we have the maximum possible number of edges in each oriented graph and there is no rainbow directed triangle. 


In the other case, when a rainbow transitive triangle is forbidden, we prove the following theorem implying Theorem~\ref{cor:oriented_transitive}.

\begin{theorem}\label{thm:oriented_transitive}
Let $G_1$, $G_2$, \ldots, $G_c$ be $c\geq3$ oriented graphs on a common set of $n$ vertices. 
If $\sum_{i = 1}^c e(G_i) > \frac{c}{3}n^2$, then there exists a rainbow transitive triangle.
\end{theorem}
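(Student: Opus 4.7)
I would argue by contradiction, letting $G=(G_1,\dots,G_c)$ be a counterexample with the smallest possible number of vertices. The oriented property yields the crude bound $\sum_i e(G_i)\le c\binom{n}{2}$, which together with the hypothesis forces $n\ge 4$; and by minimality of $n$, deleting any single vertex $v$ must produce a non-counterexample, so $d(v):=d^+(v)+d^-(v)>\frac{c(2n-1)}{3}$ for every $v$.

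The main structural input I would establish first is a \emph{rainbow 2-path lemma}: if $u\to v\in E(G_i)$ and $v\to w\in E(G_j)$ with $i\ne j$, then $u\to w$ cannot lie in any $G_k$ with $k\notin\{i,j\}$, since this would produce a rainbow transitive triangle on $\{u,v,w\}$ with source $u$, middle $v$, and sink $w$. Writing $K(x,y):=\{\,k: x\to y\in E(G_k)\,\}$, this yields the following trichotomy for every ordered pair $(u,w)$ with $|K(u,w)|\ge 1$, any fixed $k\in K(u,w)$, and every third vertex $v$: either $K(u,v)\subseteq\{k\}$, or $K(v,w)\subseteq\{k\}$, or $K(u,v)\setminus\{k\}=K(v,w)\setminus\{k\}=\{k'\}$ for some common $k'\ne k$. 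In each case $|K(u,v)|+|K(v,w)|\le c+1$, so summing over $v\ne u,w$ gives the degree bound $d^+(u)+d^-(w)\le (c+1)(n-2)+2|K(u,w)|$, together with the symmetric $d^-(u)+d^+(w)\le (c+1)(n-2)+2|K(w,u)|$.

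The hard part will be converting these pair-wise bounds into the global estimate $\sum e(G_i)\le\frac{c}{3}n^2$ with the correct leading constant. A naive Cauchy--Schwarz application on $\sum_u d^+(u)^2$ combined with the above only produces a bound of order $\tfrac{c+1}{2}n^2$, which is too weak. To sharpen it to $\tfrac{c}{3}n^2$ I would exploit the third case of the trichotomy, where \emph{both} $|K(u,v)|$ and $|K(v,w)|$ are bounded by~$2$ rather than by~$c$, and which mirrors the tripartite cyclic structure of the extremal construction; tracking how many third vertices fall into each of the three cases and combining with the oriented identity $|K(u,v)|+|K(v,u)|\le c$ for each pair should recover the tight constant. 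A possible alternative route is to appeal to the classical fact that an oriented graph on $n$ vertices with no transitive triangle has at most $\lfloor n^2/3\rfloor$ edges, so that by pigeonhole some $G_{i^*}$ contains a transitive triangle $u\to v\to w$ with $u\to w$; the trichotomy then forces the six color sets $K(\cdot,\cdot)$ on $\{u,v,w\}$ into a common two-element set containing~$i^*$ (and forces each reverse-oriented $K$ on that triangle to have size at most~$1$), giving local edge bounds that can be iterated across the many transitive triangles forced by the pigeonhole to derive the global bound and the desired contradiction.
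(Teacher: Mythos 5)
Your rainbow $2$-path lemma and the resulting trichotomy are correct, and the minimum-degree consequence of minimality is fine, but the proposal stops exactly where the proof has to happen. You concede that summing the pairwise bounds $|K(u,v)|+|K(v,w)|\le c+1$ gives only $\tfrac{c+1}{2}n^2$, and the two repair strategies you sketch are not arguments: ``tracking how many third vertices fall into each of the three cases \dots should recover the tight constant'' and ``local edge bounds that can be iterated across the many transitive triangles'' name a hope, not a mechanism. Note also that your degree inequality $d^+(u)+d^-(w)\le (c+1)(n-2)+2|K(u,w)|$ is very far from tight on the extremal example (three parts with all $c$ colors on each cyclically oriented pair gives $d^+(u)+d^-(w)=\tfrac{2c}{3}n$ versus roughly $(c+1)n$), so no purely local refinement of this single inequality can reach $\tfrac{c}{3}n^2$; some genuinely global input is required, and you have not supplied it. As written, the proposal does not prove the theorem.

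The paper's route is quite different and worth contrasting with yours. It exploits the \emph{minimal counterexample} not just for a minimum-degree bound but for the induction-style estimate $e(V')\le \tfrac{c}{3}|V'|^2$ on the complement of any small deleted set, and it chooses the right local structures to delete. First it rules out a ``thick path'' (vertices $u,v,w$ with at least $3$ edges from $u$ to $v$ and at least $3$ from $v$ to $w$): every outside vertex $x$ then sends at most $2c$ edges to $\{u,v,w\}$, and $3c+2c(n-3)+\tfrac{c}{3}(n-3)^2=\tfrac{c}{3}n^2$ gives the contradiction. Then it rules out any pair joined by at least $3$ edges by the same scheme with the bound $c+1$ per outside vertex. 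Once every pair carries at most $2$ edges, $e(G)\le 2\binom{n}{2}<n^2\le\tfrac{c}{3}n^2$ for $c\ge 3$ finishes immediately. The missing idea in your write-up is precisely this combination: identify a local configuration whose presence lets you beat the average on a small vertex set while minimality controls the rest, rather than trying to win the constant $\tfrac{c}{3}$ by a uniform pairwise inequality.
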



\begin{proof}[Proof of Theorem~\ref{thm:oriented_transitive}]
Let $G = (G_1, G_2, \ldots, G_c)$ be a sequence of oriented graphs on a common set $V(G)$ that forms a hypothetical counterexample with the smallest number of vertices.
Obviously, $n\geq 4$ as otherwise the assumption on the number of edges cannot hold.

We say that three vertices $u$, $v$ and $w$ form a \emph{thick path} if there are at least 3 edges from $u$ to $v$ and at least 3 edges from $v$ to $w$. 
We start with showing that $G$ does not contain a thick path. 

\begin{claim}\label{cla:path}
$G$ does not contain a thick path. 
\end{claim}

\begin{proof}
Assume by contradiction that $G$ contains vertices $u$, $v$ and $w$ forming a thick path.
Consider any vertex $x \in V(G) \setminus \{u,v,w\}$.
If there exists an edge from $x$ to $v$ in $G_i$ for some $i \in [c]$, then, in order to avoid having a rainbow transitive triangle, there cannot be any edge between $x$ and $u$ in a color different than $i$. 
Thus, the number of edges from $x$ to $v$ and between $x$ and $u$ is bounded by $c$. 
By a symmetric argument, the number of edges from $v$ to $x$ and between $x$ and $w$ is bounded by $c$.
Therefore, there are at most $2c$ edges between $x$ and the set $\{u,v,w\}$.
Denoting $V'=V(G) \setminus \{u,v,w\}$ we have
$$ e(G) = e(\{u,v,w\}) + e(\{u,v,w\},V') + e(V') \leq 3c + 2c(n-3) + \frac{c}{3}(n-3)^2 = \frac{c}{3}n^2,$$
which gives a contradiction with the assumed number of edges in $G$. 
\end{proof}

This property allows us to further restrict the structure of $G$.

\begin{claim}\label{cla:edge}
For every $u, v \in V(G)$ there are at most 2 edges between $u$ and $v$. 
\end{claim}

\begin{proof}
Assume by contradiction that $G$ contains two vertices $u,v$ with at least 3 edges between them. 
We will show that for any vertex $x \in V(G) \setminus \{u,v\}$ there are at most $c+1$ edges between $x$ and $\{u,v\}$.

Notice that if $xu \in E(G_i)$ and $xv \in E(G_j)$ for $i \not= j$, then we have a rainbow transitive triangle. 
Thus, there are no edges $xu$, no edges $xv$, or exactly 1 edge $xu$ and 1 edge $xv$. 
A symmetric argument holds for edges from $u$ and $v$ to $x$. 
Since we work in the setting of oriented graphs, the bound of $c+1$ edges between $x$ and $\{u,v\}$ may not hold only when there are no edges $xu$ and no edges $vx$ (or in the symmetric case).
In order to avoid creating a thick path forbidden by Claim~\ref{cla:path} and have more than $c+1$ edges between $x$ and $\{u,v\}$, we may assume that there are $c$ edges from $u$ to $x$ and $2$ edges from $v$ to $x$. 
If there is an edge $uv$, then we have a rainbow transitive triangle. 
Otherwise, there are at least three edges from $v$ to $u$ and we obtain a contradiction with Claim~\ref{cla:path}.
Therefore, indeed we have at most $c+1$ edges between $x$ and $\{u,v\}$.
Denoting $V'=V(G) \setminus \{u,v\}$ this implies that
\begin{align*}
e(G) &= e(\{u,v\}) + e(\{u,v\},V') + e(V') \leq c + (c+1)(n-2) + \frac{c}{3}(n-2)^2 \\
     &= \frac{c}{3}n^2 - \left(\frac{c}{3}-1\right)(n-1) - 1 < \frac{c}{3}n^2,
\end{align*}
which gives a contradiction with the assumed number of edges in $G$.
\end{proof}

Claim~\ref{cla:edge} implies that the number of edges in $G$ is at most $2 \binom{n}{2} < n^2 \leq \frac{c}{3}n^2$, which is a contradiction proving Theorem~\ref{thm:oriented_transitive}.
\end{proof}



\begin{thebibliography}{99}

\bibitem{ADMMS20} R. Aharoni, M. DeVos, S. Gonz\'alez, A. Montejano, R. \v S\'amal, A rainbow version of Mantel's Theorem, \textit{Advances in Combinatorics} (2020), 12043.

\bibitem{BG22} S. Babi\'nski, A. Grzesik, Graphs without a rainbow path of length 3, arXiv: 2211.02308 (2022).

\bibitem{BDHL20} J. Balogh, M. Delcourt, E. Heath, L. Li, Generalized rainbow Tur\'an numbers of odd cycles, \textit{Discrete Math.} 345(2) (2022), 112663.

\bibitem{CKLLS22} D. Chakarborti, J. Kim, H. Lee, H. Liu, J. Seo, On a rainbow extremal problem for color-critical graphs, arXiv: 2204.02575 (2022).

\bibitem{DLS13} S. Das, C. Lee, B. Sudakov, Rainbow Tur\'an problem for even cycles, \textit{European J. Combin.} 34 (2013), 905--915.

\bibitem{DM07} A. Diwan, D. Mubayi, Tur\'an's theorem with colors, preprint, \url{http://www.math.cmu.edu/~mubayi/papers/webturan.pdf}, 2007.

\bibitem{EGM19} B. Ergemlidze, E. Gy\H ori, A. Methuku, On the Rainbow Tur\'an number of paths, \textit{Electron. J.~Combin.} 26 (2019), P1.17.

\bibitem{FMR23} V. Falgas-Ravry, K. Markstr\"om, E. R\"aty, Minimum degree conditions for rainbow triangles, arXiv: 2305.12772 (2023).

\bibitem{FMR22} V. Falgas-Ravry, K. Markstr\"om, E. R\"aty, Rainbow variations on a theme by Mantel: extremal problems for Gallai colouring templates, arXiv: 2212.07180 (2022).

\bibitem{Fra22} P. Frankl, Graphs without rainbow triangles, arXiv: 2203.07768 (2022).

\bibitem{FGHLSTVZ22} P. Frankl, E. Gy\H ori, Z. He, Z. Lv, N. Salia, C. Tompkins, K. Varga, X. Zhu, Some remarks on graphs without rainbow triangles, arXiv: 2204.07567 (2022).

\bibitem{GMMC19} D. Gerbner, T. M\'esz\'aros, A. Methuku, C. Palmer, Generalized rainbow Tur\'an problems, \textit{Electron. J. Comb.} 29 (2022), P2.44. 

\bibitem{GHMPS22} P. Gupta, F. Hamann, A. M\" uyesser, O. Parczyk, A. Sgueglia, A general approach to transversal versions of Dirac-type theorems, arXiv: 2209.09289 (2022).

\bibitem{Jan20} B. Janzer, The generalised rainbow Tur\'an problem for cycles, \textit{SIAM Journal on Discrete Mathematics} 36 (2022), 436--448.

\bibitem{Jan20a} O. Janzer, Rainbow Tur\'an number of even cycles, repeated patterns and blow-ups of cycles, \textit{Isr. J. Math.} 253 (2023), 813--840.

\bibitem{JPS17} D. Johnston, C. Palmer, A. Sarkar, Rainbow Tur\'an Problems for Paths and Forests of Stars, \textit{Electron. J. Combin.} 24 (2017), P1.34.

\bibitem{JR20} D. Johnston, P. Rombach, Lower bounds for rainbow Tur\'an numbers of paths and other trees, \textit{Australas. J Comb.} 78 (2020), 61--72.

\bibitem{JK20} F. Joos, J. Kim, On a rainbow version of Dirac's theorem, \textit{Bull. Lond. Math. Soc.} 52.3 (2020), 498--504.

\bibitem{KMSV07} P. Keevash, D. Mubayi, B. Sudakov, J. Verstra\"ete, Rainbow Tur\'an problems, \textit{Combin. Probab. Comput.} 16 (2007), 109--126.

\bibitem{KSSV04} P. Keevash, M. Saks, B. Sudakov, J. Verstra\"ete, Multicolour Tur\'an problems, \textit{Advances in
Applied Mathematics} 33(2) (2004), 238--262.

\bibitem{LMT20} A. Lamaison, A. M\"uyesser, M. Tait, On a colored Tur\'an problem of Diwan and Mubayi, \textit{Discrete Math.} 345(10) (2022), 113003.

\bibitem{Mag15} C. Magnant, Density of Gallai Multigraphs, \textit{Electron. J. Comb.} 22 (2015), P1.28.

\end{thebibliography}
\end{document}